\newtheorem{thm}{Theorem}
\newtheorem{lm}[thm]{Lemma}
\newtheorem{cor}[thm]{Corollary}
\newtheorem{rmk}{Remark}
\newtheorem*{dfn}{Definition}
\newtheorem*{sumthm}{Summarizing Theorem}
\def\df{\def}
\df\P{{\mathcal P}}
\df\Q{{\mathcal Q}}
\df\D{{\mathcal D}}
\df\X{{\mathcal X}}
\df\G{{\Gamma}}
\df\g{{\gamma}}
\df\l{{\lambda}}
\df\L{{\Lambda}}
\df\k{{\kappa}}
\df\o{{\omega}}
\df\O{{\Omega}}
\df\r{{\rho}}
\df\s{{\sigma}}
\df\d{{\delta}}
\df\e{{\varepsilon}}
\df\a{{\alpha}}
\df\b{{\beta}}
\df\t{{\theta}}
\df\R{{\mathbb{R}}}
\df\N{{\mathbb{N}}}
\begin{document}

\author{Jo\"{e}l Rouyer
\and Costin V\^{\i}lcu}
\title{Simple closed geodesics \\on most Alexandrov surfaces}
\maketitle

\begin{abstract}
We study the existence of simple closed geodesics on most (in the sense of
Baire category) Alexandrov surfaces with curvature bounded below, compact and
without boundary. We show that it depends on both the curvature bound and the
topology of the surfaces.

\end{abstract}

\bigskip

{\small Math. Subj. Classification (2010): 53C45, 53C22}

{\small Key words and phrases: Alexandrov surface, simple closed geodesic,
Baire category}


\section{Introduction}

The existence of closed geodesics is of certain interest in the geometry of
Riemannian surfaces, and was studied in many articles. We mention here only a
very few facts, related to our topic. In this paper, whenever we consider
several geodesics they are geometrically distinct.

In the late nineteenth century, J. Hadamard \cite{Hadamard} showed that every
non-trivial homotopy class of closed curves on a closed Riemannian manifold
$M$ contains geodesics.

It is a famous result of L. A. Lusternik and L. G. Schnirelman that for every
Riemannian metric on the $2$-sphere there exist at least three simple closed
geodesics (and sometimes exactly three, \eg for ellipsoids with distinct axes)
\cite{k}. This was completed by a combined result of J. Franks and V. Bangert
\cite{f}, \cite{ba}, stating that for every metric on such a surface there
exist infinitely many closed geodesics.

On the other hand, for a given upper bound on the length, the number of closed
geodesics is usually finite.

M. Mirzakhani \cite{Mirzakhani} showed that the number $n_{X}(L)$, of simple
closed geodesics of length $\leq L$ on a hyperbolic Riemannian surface $X$ of
genus $g$, is asymptotic to $c_{X} L^{6g-6}$ as $L\to\infty$, where $c_{X}$ is
a constant depending on $X$.

G. Contreras \cite{Contreras} proved that for every closed manifold $M$ of
dimension at least two, there is an open and dense subset of the space of
$\mathcal{C}^{\infty}$ Riemannian metrics on $M$, any metric on which
satisfies $\lim_{L\rightarrow\infty}\frac{\log p(L)}{L}>0$, where $p(L)$ is
the number of closed geodesics of length $\leq L$.

Recall that Baire categories were previously employed in the study of
geodesics in the framework of Riemannian geometry. Improving previous results
of several authors, H. Rademacher \cite{Rademacher} proved that a
$\mathcal{C}^{r}$ typical metric on a compact simply connected manifold
carries infinitely many (not necessarily simple) closed geodesics ($2\leq
r\leq\infty$).

In this paper we consider the Baire space $\mathcal{A}(\kappa)$ of Alexandrov
surfaces (definitions below), in which smooth Riemannian surfaces form a set
of first category, even though dense. In this space, we study the existence of
simple closed geodesics on a typical surface, and show that it depends on both
the curvature bound and the topology of the surface.

\bigskip

Formally, we denote by $\mathcal{A}(\kappa)$ the set of all compact Alexandrov
surfaces with curvature bounded below by $\kappa$, without boundary. We refer
to \cite{BGP} or \cite{Shiohama92} for the precise definition and basic facts
about such spaces.

It is known that these surfaces are $2$-dimensional topological manifolds.
Closed Riemannian surfaces with Gauss curvature at least $\kappa$ and $\kappa
$-polyhedra (see $\S $\ref{Preliminaries} for the definition) are important
examples of such surfaces.

It is also known that, endowed with topology induced by the Gromov-Hausdorff
distance, $\mathcal{A}(\kappa)$ is a Baire space \cite{IRV2}. In any Baire
space, one says that \textit{most elements} or a \textit{typical element}
enjoys a property $P$ if the set of those elements which do not satisfy $P$ it
is of first category.

Let $\mathcal{A}\left(  \kappa,\chi\right)  $ denote the set of those surface
in $\mathcal{A}(\kappa)$ whose Euler-Poincar\'{e} characteristic is $\chi$.
The connected components of $\mathcal{A}\left(  \kappa\right)  $ are the sets
of those surfaces of a given topological type \cite{RV2}. Therefore,
$\mathcal{A}\left(  \kappa,\chi\right)  $ (if non-empty) is a connected
component of $\mathcal{A}\left(  \kappa\right)  $ if $\chi$ is positive or
odd, and is the union of two components otherwise.

\bigskip

The space of all convex surfaces in ${\mathbb{R}}^{3}$ is naturally endowed
with the Pompeiu-Hausdorff metric. By celebrated results of Alexandrov (for
existence, see \cite[p. 362]{al}) and Pogorelov (for rigidity, see \cite[p.
167]{Pog}), each surface $A\in\mathcal{A}(0,2)$ can be realized as a convex
surface in ${\mathbb{R}}^{3}$, unique up to an isometry of the ambient space.
Therefore, the intrinsic geometry of convex surfaces is a particular case of
the geometry of Alexandrov surfaces.

P. Gruber proved that most convex surfaces have no simple closed geode\-sics
\cite{grub1}, and later improved this result by dropping the simpleness
assumption \cite{grub}. His result strongly contrasts the mentioned result of
L. A. Lusternik and L. G. Schnirelman. Nevertheless, on any convex surface
there exist three simple closed quasi-geodesics \cite{p} (see for example
\cite[p. 373]{al} for the definition).

Adapted to our framework, P. Gruber's result states that \textsl{most
Alexandrov surfaces in $\mathcal{A}\left(  0,2\right)  $ have no (simple)
closed geodesics}. In this paper we investigate the typical existence -- or
non-existence -- of simple closed geodesics for the other values of $\kappa$
and $\chi$.

Notice that it suffices to study the curvature bounds $\kappa\in\{-1,0,1\}$,
because there is a natural homothety from $\mathcal{A}\left(  \kappa\right)  $
to $\mathcal{A}\left(  1\right)  $ if $\kappa>0$, and to $\mathcal{A}\left(
-1\right)  $ if $\kappa<0$. Also notice that $\mathcal{A}\left(
\kappa^{\prime}\right)  $ is nowhere dense in $\mathcal{A}\left(
\kappa\right)  $ for $\kappa^{\prime}>\kappa$, so a typical element in
$\mathcal{A}\left(  \kappa^{\prime}\right)  $ is not typical in $\mathcal{A}%
\left(  \kappa\right)  $.

\bigskip

Since the total curvature of a surface of $\mathcal{A}\left(  \kappa,0\right)
$ vanishes, the space $\mathcal{A}(0,0)$ contains only flat tori and flat
Klein bottles (see \cite[Lemma 4 ]{RV2}). It follows that each $A\in
\mathcal{A}\left(  0,0\right)  $ is union of simple closed geodesics.

In Section \ref{neg} we prove that most surfaces in $\mathcal{A}(-1)$ admit
infinitely many, non-intersecting, simple closed geodesics, and in Section
\ref{SRP2} we prove that most surfaces in $\mathcal{A}(\kappa,1)$ admit
infinitely many simple closed geodesics, all of bounded length. This contrasts
the mentioned result of M. Mirzakhani.

In Section \ref{last} we treat the remaining case -- $\mathcal{A}\left(
1,2\right)  $ -- and prove that a typical surface there has no simple closed geodesic.

\bigskip

Many properties of most convex surfaces have been investigated (see for
example the surveys \cite{gw} and \cite{z-b}), but only a few of them have
been hitherto generalized to Alexandrov surfaces (see \cite{A-Z},
\cite{IRV2}). In particular, most surfaces in $\mathcal{A}(\kappa)$ if
$\kappa\neq0$, and most surfaces in $\mathcal{A}(0)\setminus\mathcal{A}\left(
0,0\right)  $, are not Riemannian manifolds of class $\mathcal{C}^{2}$.


\section{Preliminaries}

\label{Preliminaries}

Let $H$ and $K$ be compact subsets of a metric space $Z$; we denote by
$d_{H}^{Z}\left(  H,K\right)  $ the usual Pompeiu-Hausdorff distance between
them. We shall omit the superscript $Z$ whenever no confusion is possible.

If $X$ and $Y$ are compact metric spaces, we denote by $d_{GH}\left(
X,Y\right)  $ the \emph{Gromov-Hausdorff distance} between $X$ and $Y$. For
its definition and basic properties, we refer to \cite{GPL} or \cite{BBI}.
Recall that we have $d_{GH}\left(  H,K\right)  \leq d_{H}^{Z}\left(
H,K\right)  $ for any compact subsets $H$, $K$ of a given metric space $Z$;
moreover, we have the following lemma.

\begin{lm}
\cit{JR7}\label{SEL} Let $\left\{  X_{n}\right\}  _{n\in\mathbb{N}}$ be a
sequence of compact metric spaces converging to $X$ with respect to the
Gromov-Hausdorff metric, and let $\left\{  \varepsilon_{n}\right\}
_{n\in\mathbb{N}}$ be a sequence of positive numbers. Then there exist a
compact metric space $Z$, an isometric embedding $\varphi:X\rightarrow Z$ and,
for each positive integer $n$, an isometric embedding $\varphi_{n}%
:X_{n}\rightarrow Z$, such that
\[
d_{H}^{Z}\left(  \varphi_{n}\left(  X_{n}\right)  ,\varphi\left(  Y\right)
\right)  <d_{GH}\left(  X_{n},X\right)  +\varepsilon_{n}.
\]

\end{lm}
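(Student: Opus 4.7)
The plan is to glue together, along a common copy of $X$, separate witness spaces for each Gromov--Hausdorff distance $d_{GH}(X_{n},X)$. For each $n$, the definition of $d_{GH}$ furnishes a compact metric space $(Z_{n},d_{n})$ and isometric embeddings $\alpha_{n}\colon X\to Z_{n}$ and $\beta_{n}\colon X_{n}\to Z_{n}$ with
\[
d_{H}^{Z_{n}}(\beta_{n}(X_{n}),\alpha_{n}(X))<d_{GH}(X_{n},X)+\min(\varepsilon_{n},1/n).
\]
The artificial $1/n$ guarantees that these Hausdorff distances tend to $0$, which will be needed for compactness. On the disjoint union $Y=X\sqcup\bigsqcup_{n\in\mathbb{N}}X_{n}$ I would then define $d\colon Y\times Y\to[0,\infty)$ to agree with the original distance inside $X$ and inside each $X_{n}$, to equal the $Z_{n}$-distance on pairs in $X\times X_{n}$, and, for $y\in X_{n}$ and $y'\in X_{m}$ with $n\neq m$, to be given by
\[
d(y,y')=\inf_{x\in X}\bigl[d_{n}(\beta_{n}(y),\alpha_{n}(x))+d_{m}(\alpha_{m}(x),\beta_{m}(y'))\bigr].
\]

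A routine case analysis then shows $d$ is a pseudometric: the only non-obvious instances of the triangle inequality reduce, via the triangle inequality inside the relevant $Z_{\ell}$ applied to two points of $\alpha_{\ell}(X)$, to cases that hold by definition. Passing to the quotient by the relation $d(p,p')=0$ yields a metric space $Z$, and by construction the induced maps $\varphi\colon X\to Z$ and $\varphi_{n}\colon X_{n}\to Z$ are isometric embeddings. To see that $Z$ is compact, consider a sequence $(z_{k})\subset Z$: if $\varphi(X)$ or a single $\varphi_{n}(X_{n})$ contains infinitely many terms, then compactness of that piece yields a cluster point; otherwise one extracts a subsequence $z_{k}\in\varphi_{n_{k}}(X_{n_{k}})$ with $n_{k}\to\infty$, and since $d_{H}^{Z}(\varphi_{n_{k}}(X_{n_{k}}),\varphi(X))\to 0$, one may choose $x_{k}\in X$ with $d(z_{k},\varphi(x_{k}))\to 0$ and apply the compactness of $X$. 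The Hausdorff bound in the statement is then immediate, since $d_{H}^{Z}(\varphi_{n}(X_{n}),\varphi(X))$ coincides with $d_{H}^{Z_{n}}(\beta_{n}(X_{n}),\alpha_{n}(X))$ and is therefore bounded by $d_{GH}(X_{n},X)+\min(\varepsilon_{n},1/n)<d_{GH}(X_{n},X)+\varepsilon_{n}$.

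The principal obstacle is confirming that the gluing is coherent: the cross-distances prescribed on $X\times X_{n}$ must be compatible with the intrinsic distances inside each summand through the triangle inequality, and the infimum defining the distance between two distinct copies $X_{n}$ and $X_{m}$ must not accidentally collapse distances that the construction is meant to preserve. Beyond this bookkeeping, the only other point requiring care is the compactness argument sketched above, which is exactly where the insertion of $1/n$ in the initial choice of $Z_{n}$ plays its role.
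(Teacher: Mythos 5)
The paper does not include its own proof of this lemma; it is cited verbatim from \cite{JR7}. Judged on its own merits, your construction is correct and is the standard way to prove this kind of statement: for each $n$, take a witness $Z_n$ for $d_{GH}(X_n,X)$, then form the disjoint union $X\sqcup\bigsqcup_n X_n$, extend the metric by routing cross-distances through the common copy of $X$, and quotient out zero-distance pairs. All the places requiring care are handled: every instance of the triangle inequality in the case analysis reduces, as you say, to the triangle inequality inside a single $Z_\ell$ applied via two points of $\alpha_\ell(X)$; the quotient cannot identify two distinct points of the same $X_n$ (or of $X$), since each embedding is isometric, so $\varphi$ and the $\varphi_n$ remain injective; the equality $d_H^Z(\varphi_n(X_n),\varphi(X))=d_H^{Z_n}(\beta_n(X_n),\alpha_n(X))$ is immediate from the definition of the cross-distance on $X\times X_n$; and the $\min(\varepsilon_n,1/n)$ truncation is exactly what makes the total-boundedness/sequential-compactness argument for $Z$ go through. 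The one point I would make explicit if you wrote this up in full: in the compactness argument, after extracting $z_k\in\varphi_{n_k}(X_{n_k})$ with $n_k\to\infty$ you should note that $X$ being compact lets you pass to a further subsequence with $x_k\to x$, and then $d(z_k,\varphi(x))\le d(z_k,\varphi(x_k))+d(\varphi(x_k),\varphi(x))\to 0$. This is implicit in what you wrote but worth spelling out.
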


A more sophisticated fact is the famous Perel'man's theorem of stability. The
reader will find a complete proof in \cite{Kapo1}, or in the original
manuscript \cite{Per1}; in our case ($2$-dimensional spaces without boundary)
the proof admits large simplifications. In order to give its statement, recall
the definition of \emph{distortion}. If $f:X\rightarrow Y$ is a map between
metric spaces then
\[
\mathrm{dis}\left(  f\right)  =\sup_{x,x^{\prime}\in X}\left\vert d\left(
x,x^{\prime}\right)  -d\left(  f(x),f(x^{\prime})\right)  \right\vert \text{.}%
\]

\begin{lm}
[Perel'man's stability theorem]\label{LP} Let $A_{n}$, $A\in\mathcal{A}%
(\kappa)$ and suppose that there exist functions $f_{n}:A\rightarrow A_{n}$
such that $\mathrm{dis}\left(  f_{n}\right)  \rightarrow0$. Then, for $n$
large enough, there exists homeomorphisms $h_{n}:A\rightarrow A_{n}$ such that
$\sup_{x\in A}d\left(  f_{n}\left(  x\right)  ,h_{n}\left(  x\right)  \right)
\rightarrow0$.
\end{lm}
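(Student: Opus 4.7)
My plan is to adapt Perel'man's approach, considerably simplified by the two-dimensional hypothesis. The homeomorphism $h_n$ would be constructed by perturbing $f_n$ on charts given by distance functions, then gluing the pieces using two-dimensional surface topology.

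First I would exploit the structure of $A$: by Gauss--Bonnet and the finiteness of its total curvature, a compact Alexandrov surface of curvature $\geq\k$ has only finitely many \emph{singular} (conical) points $p_{1},\ldots,p_{N}$, with total angle $\t_{i}<2\pi$, while at every other (\emph{regular}) point the tangent cone is ${\R}^{2}$. Around a regular $x$ I would pick $a,b\in A$ at small distance $r$ from $x$ with $\angle_{x}(a,b)\approx\pi/2$; then $\Phi_{x}\colon y\mapsto(d(y,a),d(y,b))$ is a bi-Lipschitz chart onto a neighborhood of $(r,r)\in{\R}^{2}$. Because $\mathrm{dis}(f_{n})\to 0$, the analogous map $\Phi_{x}^{n}$ in $A_{n}$ built from $f_{n}(a),f_{n}(b)$ is a chart on a neighborhood of $f_{n}(x)$, and $(\Phi_{x}^{n})^{-1}\circ\Phi_{x}$ supplies a local homeomorphism uniformly close to $f_{n}$. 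Around a singular point $p_{i}$ I would instead use the distance from $p_{i}$ together with an arc-length parameter on a small geodesic circle around $p_{i}$ to build a chart identifying a neighborhood of $p_{i}$ with a neighborhood of the apex of the model cone of total angle $\t_{i}$.

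The main obstacle is the transfer of the singular chart to $A_{n}$: for each $i$ and each sufficiently large $n$ one must exhibit a distinguished point $p_{i}^{n}\in A_{n}$ close to $f_{n}(p_{i})$ with cone angle $\t_{i}^{n}\to\t_{i}$, and verify that no extra singular point of $A_{n}$ hides inside the chart. This would follow from the weak convergence of the curvature measures under Gromov--Hausdorff convergence --- so that the atomic parts of the limit measure must match --- combined with Gauss--Bonnet on $A$ and $A_{n}$, using that the topological type (and hence $\chi$) is preserved along converging sequences in ${\mathcal{A}}(\k)$.

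Finally, I would cover $A$ by finitely many of the charts above, choosing $r$ smaller than half the minimum of the pairwise distances between distinct singular points and smaller than the injectivity-like scale at which each chart is a homeomorphism. The local homeomorphisms are then spliced by induction on the cover, using the fact that in dimension $2$ two homeomorphisms of a disk onto a disk that are $\e$-close in the uniform norm can be isotoped to agree on any prescribed subdisk. This would produce the required global homeomorphism $h_{n}\colon A\to A_{n}$ with $\sup_{x\in A}d(f_{n}(x),h_{n}(x))\to 0$.
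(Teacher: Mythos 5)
The paper does not actually prove Lemma \ref{LP}; it cites Kapovitch \cite{Kapo1} and Perel'man \cite{Per1} and remarks only that the two-dimensional case admits simplifications. Your sketch is in the spirit of those references (distance-coordinate charts glued by an induction on a cover), but it contains a genuine gap that would invalidate it as written.

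Your starting point is that a compact Alexandrov surface with curvature $\geq\kappa$ has only finitely many conical (singular) points, with all other points regular. This is false. The set of points whose tangent cone is a cone of angle $<2\pi$ is in general countably infinite, and can even be dense; only the points with angle defect exceeding any fixed $\varepsilon>0$ are finite in number. Moreover the curvature of an Alexandrov surface is a signed measure which need not be atomic at all (a smooth Riemannian surface is the simplest example), so the dichotomy ``finitely many cone points, Euclidean elsewhere'' simply does not describe the class $\mathcal{A}(\kappa)$. The paper itself records that most surfaces in $\mathcal{A}(\kappa)$ for $\kappa\neq 0$ are not $\mathcal{C}^2$ Riemannian manifolds. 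Consequently the chart construction cannot be organised around an enumeration $p_1,\dots,p_N$ of singular points, and the claim that one may choose $r$ smaller than half the minimal pairwise distance between distinct singular points is vacuous when those points accumulate.

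The second weak point is the step transferring the singular chart to $A_n$. You appeal to weak convergence of curvature measures to conclude that ``the atomic parts of the limit measure must match,'' i.e.\ that each cone point of $A$ is shadowed by a cone point of $A_n$ of nearby angle. Weak convergence of measures does not imply convergence of atoms: an atom in the limit can arise from a cluster of many small atoms, or from absolutely continuous curvature concentrating near a point, in the approximating surfaces. (Indeed, Lemma \ref{Approximation} in the paper says smooth Riemannian surfaces are dense in $\mathcal{A}(\kappa)$, so any cone point of $A$ can be approximated without any cone point at all in $A_n$.) So there is in general no distinguished $p_i^n\in A_n$ to anchor the chart, and the argument as described does not go through. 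A correct proof along Perel'man's lines must instead use distance coordinates that make sense uniformly (e.g.\ admissible maps / strainers at a fixed scale) rather than a classification into ``regular'' and ``conical'' points, together with a much more careful gluing lemma that controls the deformation needed to match overlapping local homeomorphisms while staying uniformly close to $f_n$; your final isotopy step is too vague to serve as the required gluing lemma.
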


Consider two surfaces $S$ and $S^{\prime}$ with boundaries $\partial S$ and
$\partial S^{\prime}$; assume there exist arcs $I\subset\partial S$ and
$I^{\prime}\subset\partial S^{\prime}$ having the same length. By
\textit{gluing $S$ to $S^{\prime}$ along $I$} we mean identifying the points
$x\in I$ and $\iota(x)\in I^{\prime}$, where $\iota:I\rightarrow I^{\prime}$
is a length preserving map between $I$ and $I^{\prime}$.

\begin{lm}
[Alexandrov's gluing theorem]\label{gluing}\cit{Pog} Let $S$ be a closed
topological surface obtained by gluing finitely many geodesic polygons cut out
from surfaces in $\mathcal{A}(\kappa)$, in such a way that the sum of the
angles glued together at each point is at most $2\pi$. Then, endowed with the
induced metric, $S$ belongs to $\mathcal{A}(\kappa)$.
\end{lm}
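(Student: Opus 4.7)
The plan is to verify, at every point $p\in S$, the local triangle comparison characterizing lower curvature bound $\kappa$: every sufficiently small geodesic triangle in $S$ is at least as ``fat'' as its comparison triangle in the model surface $M_\kappa^2$ of constant curvature $\kappa$. Since this condition is local, I would classify points of $S$ according to their position in the gluing pattern and examine small triangles in each case.

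For $p$ in the interior of a glued polygon, a small metric ball around $p$ is isometric to a ball in the ambient surface of $\mathcal{A}(\kappa)$ from which the polygon was cut, and the comparison is inherited directly. For $p$ in the relative interior of a glued edge, the two polygon angles at $p$ sum to $2\pi$; a first-variation argument shows that any local geodesic crossing the seam does so with supplementary angles and thus behaves like a straight segment across the edge. Any small triangle with a side crossing the seam can then be split along the crossing segment into two sub-triangles, each contained in a single polygon. Gluing the two $M_\kappa^2$-comparison triangles of these sub-triangles along their common side produces a geodesic bigon whose boundary angles bound the $S$-angles from below, and Alexandrov's elementary lemma on model triangles glued along a side then yields the required inequality.

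For $p$ at a gluing vertex $v$ where finitely many polygon corners with total angle $\theta\leq 2\pi$ are identified, the argument generalizes: any small triangle $T$ near $v$ can be subdivided by geodesic segments issuing from $v$ to its sides into wedge-shaped sub-triangles, each lying in a single polygon; one applies the comparison in each wedge and then iterates the gluing lemma in $M_\kappa^2$. The inequality $\theta\leq 2\pi$ is precisely what ensures that the developed configuration of wedges embeds in, and is dominated by, a single comparison triangle in $M_\kappa^2$ with the same three side lengths as $T$.

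I expect the main difficulty to lie in the careful first-variation analysis needed to describe geodesics of $S$ meeting the seam network: ruling out branching, establishing the supplementary-angle law at edge-interior crossings, and classifying geodesics passing through a vertex where $\theta<2\pi$ (they must either avoid $v$ or satisfy a turning-angle bound compatible with $2\pi-\theta$). Once these local geodesic statements are secured, the triangle comparison reduces to elementary planar, spherical, or hyperbolic geometry via iterated application of the model-space gluing lemma, and the conclusion $S\in\mathcal{A}(\kappa)$ follows from the standard local-to-global principle for Alexandrov spaces with curvature bounded below.
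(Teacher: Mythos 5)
The paper does not prove this lemma: it is stated with a citation to Pogorelov's monograph (the result goes back to Alexandrov), so there is no internal proof for your sketch to be measured against. On its own terms, your outline does follow the standard route to Alexandrov's gluing theorem: reduce to local triangle comparison, stratify the seam set into polygon interiors, edge interiors, and vertices, subdivide small triangles into pieces that each lie in one glued polygon, compare each piece with a model triangle in $M_\kappa^2$, reassemble the model pieces using Alexandrov's lemma, and invoke the globalization theorem for complete Alexandrov spaces. Two points deserve more care than you give them. First, you have the direction of the angle comparison exactly backwards in your last paragraph: the wedge angles of the comparison pieces at $\tilde v$ are \emph{at most} the corresponding wedge angles in $S$ (lower curvature bound makes actual angles $\geq$ comparison angles), and it is precisely this inequality, together with $\theta \leq 2\pi$, that guarantees the developed fan of model triangles fits around $\tilde v$ in $M_\kappa^2$; as written (``embeds in, and is dominated by, a single comparison triangle'') the logic is muddled. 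Second, the preliminary geodesic analysis you defer — no branching across seams, supplementary angles at edge interiors, and the behaviour of geodesics hitting a vertex with cone angle $<2\pi$ (in a surface of curvature $\geq\kappa$ a geodesic cannot pass through such a point except in the doubled-boundary sense, and minimizers will avoid it) — is not merely technical bookkeeping but where most of the real work lies; your sketch acknowledges it but offers no mechanism for it. With those two points repaired, the outline is a correct account of the classical argument.
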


Let $\mathbb{M}_{\kappa}$ denote the simply-connected and complete Riemannian
surface of constant curvature $\kappa$.

A $\kappa$\emph{-polyhedron} is an Alexandrov surface obtained by gluing
finitely many geodesic polygons from $\mathbb{M}_{\kappa}$. Let $\mathcal{P}%
\left(  \kappa\right)  $ denote the set of all $\kappa$-polyhedra.

A formal proof for the next result can be found, for example, in \cite{IRV2}.

\begin{lm}
\label{Approximation} The subset of $\kappa$-polyhedra, and the subset of
closed Riemannian surfaces with Gauss curvature at least $\kappa$, are both
dense in $\mathcal{A}(\kappa)$.
\end{lm}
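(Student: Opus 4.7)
The plan is to prove the two density statements in sequence: first I would show that $\mathcal{P}(\kappa)$ is dense via a comparison-triangle construction, and then deduce the density of smooth Riemannian surfaces with Gauss curvature $\geq\kappa$ by smoothing each $\kappa$-polyhedron.

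For the polyhedral density, fix $A\in\mathcal{A}(\kappa)$ and a small $\delta>0$. I would pick a $\delta$-net $\{p_{1},\ldots,p_{N}\}\subset A$ and build an associated geodesic triangulation $T$ of $A$ with this vertex set, whose triangles have diameter $O(\delta)$; such triangulations exist on a compact two-dimensional Alexandrov surface without boundary once $\delta$ is below the local convexity radius (for instance via a Delaunay-type construction on the net). For each triangle $\Delta\in T$ with side lengths $a,b,c$, I would replace $\Delta$ by the comparison triangle $\widetilde{\Delta}\subset\mathbb{M}_{\kappa}$ having the same side lengths, and then glue the $\widetilde{\Delta}$'s back together with the same combinatorics as $T$. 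This produces a closed topological surface $P$ which is, by construction, a $\kappa$-polyhedron.

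To see that $P\in\mathcal{A}(\kappa)$, I invoke Lemma \ref{gluing}: at each vertex $p_{i}$, the angles of the triangles of $T$ meeting at $p_{i}$ sum to at most $2\pi$ (the deficit being the non-negative concentrated curvature at $p_{i}$), while Toponogov's comparison for curvature $\geq\kappa$ tells us that each angle of $\widetilde{\Delta}$ is no larger than the corresponding angle of $\Delta$. Hence the comparison angles at $p_{i}$ also sum to $\leq 2\pi$, so the gluing hypothesis is satisfied. For the Gromov--Hausdorff convergence $P\to A$, the vertex-to-vertex identification, extended inside each triangle by a Lipschitz parametrization, has distortion $O(\delta)$, since $\Delta$ and $\widetilde{\Delta}$ share all three side lengths and, at scale $\delta$, each is $O(\kappa\delta^{2})$-close to its Euclidean model. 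Thus $d_{GH}(A,P)\to 0$ as $\delta\to 0$.

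For the smooth density, given a $\kappa$-polyhedron $Q$, its singular set consists of finitely many cone points, away from which $Q$ is locally isometric to $\mathbb{M}_{\kappa}$. Around each cone point of cone angle $\theta\leq 2\pi$, I would replace a small rotationally symmetric disk of radius $r_{0}$ by a smooth rotationally symmetric cap of Gauss curvature $\kappa+f(r)$, where $f\geq 0$ is supported in $[0,r_{0})$, has total integral equal to the atomic excess $2\pi-\theta$, and is chosen so that the cap glues $C^{1}$-isometrically to $Q$ along the circle $r=r_{0}$. The resulting closed surface is a smooth Riemannian manifold with Gauss curvature $\geq\kappa$, and lies at GH-distance $O(r_{0})$ from $Q$; combined with the first step and the triangle inequality for $d_{GH}$, this yields density of smooth Riemannian surfaces with Gauss curvature $\geq\kappa$ in $\mathcal{A}(\kappa)$. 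The main obstacle is the smoothing cap: producing a radial profile that is truly $C^{\infty}$, matches the cone exactly outside the disk, and has Gauss curvature everywhere $\geq\kappa$, which amounts to solving a Jacobi-type ODE with prescribed boundary data and a non-negative curvature density---standard but requiring care.
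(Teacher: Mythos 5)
The paper does not actually prove Lemma \ref{Approximation}: it defers to the reference [IRV2] (and ultimately to Alexandrov and Zalgaller). Your two-stage plan --- replace the triangles of a fine geodesic triangulation by comparison triangles in $\mathbb{M}_{\kappa}$ and glue (using Lemma \ref{gluing} plus the one-sided angle comparison to check the $\leq 2\pi$ condition), then smooth the finitely many cone points of the resulting $\kappa$-polyhedron by rotationally symmetric caps of curvature $\geq\kappa$ --- is exactly the standard route taken there, and the angle-comparison verification of the gluing hypothesis is correct. The smoothing stage is also fine in outline (only note that a $C^{1}$ match along $r=r_{0}$ gives a $C^{1,1}$ metric, so the curvature density $f$ should vanish to infinite order at $r_{0}$, and $r_{0}$ must be small enough that the caps at distinct cone points are disjoint).

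There are, however, two genuine gaps in the first stage, both caused by cone points, i.e.\ by exactly the non-Riemannian features the lemma is about. First, the existence of geodesic triangulations of small mesh cannot be obtained from ``$\delta$ below the local convexity radius'': at a cone point of total angle $\theta<\pi$ there are pairs of points at arbitrarily small distance joined by two distinct minimizing geodesics, so no positive convexity radius exists; triangulability is true but is itself a theorem of Alexandrov--Zalgaller and must be quoted, not derived from Riemannian-type injectivity arguments. Second, and more seriously, your distortion estimate rests on the claim that each triangle $\Delta\subset A$ is $O(\kappa\delta^{2})$-close to its Euclidean model. This is false: the lower curvature bound gives only a one-sided comparison, and a triangle of mesh $\delta$ whose interior contains a cone point of total angle $\varepsilon\ll 1$ has all three side lengths of order $\delta\varepsilon$ but intrinsic diameter of order $\delta$, so it is nowhere near its comparison triangle. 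Moreover, even granting an $O(\delta)$ error per triangle, a path between distant net points crosses on the order of $1/\delta$ triangles, so these local errors a priori accumulate to an $O(1)$ discrepancy between $d_{P}$ and $d_{A}$. Controlling $|d_{P}(p_{i},p_{j})-d_{A}(p_{i},p_{j})|$ uniformly (in both directions) is the delicate heart of the polyhedral approximation theorem and requires the curvature hypothesis globally, not triangle by triangle; as written, your $O(\delta)$ distortion claim is unsupported.
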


The length of a curve $\gamma$ will be denoted by $\mathcal{\ell}\left(
\gamma\right)  $.

\begin{lm}
\label{LA} Let $X$ be a compact metric space, and for each $n\in\mathbb{N}$
let $\gamma_{n}:\left[  0,1\right]  \rightarrow X$ be a rectifiable arc
parametrized proportionally to the arc-length. Assume that the sequence
$\left\{  \mathcal{\ell}\left(  \gamma_{n}\right)  \right\}  _{n}$ is bounded.
Then one can extract from it a subsequence converging uniformly to a
rectifiable arc $\gamma:\left[  0,1\right]  \rightarrow X$. Moreover,
$\mathcal{\ell}\left(  \gamma\right)  \leq\liminf\mathcal{\ell}\left(
\gamma_{n}\right)  $ and $\gamma_{n}\left(  \left[  0,1\right]  \right)  $
converges to $\gamma\left(  \left[  0,1\right]  \right)  $ for the
Pompeiu-Hausdorff metric.
\end{lm}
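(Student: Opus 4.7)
The plan is a standard Arzelà--Ascoli argument followed by lower semicontinuity of length, with the Pompeiu--Hausdorff conclusion as an immediate corollary of uniform convergence.

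First I would use the hypothesis on parametrization. Since $\gamma_n$ is parametrized proportionally to arc-length, for every $s,t\in[0,1]$ we have $d\bigl(\gamma_n(s),\gamma_n(t)\bigr)\le\ell(\gamma_n)\,|s-t|$. Let $L$ be an upper bound for $\{\ell(\gamma_n)\}$. Then every $\gamma_n$ is $L$-Lipschitz, so the family $\{\gamma_n\}$ is equicontinuous; it is also pointwise relatively compact because $X$ is compact. By the Arzelà--Ascoli theorem, one can extract a subsequence (which I still denote $\gamma_n$) converging uniformly to a map $\gamma:[0,1]\to X$. The uniform limit of $L$-Lipschitz maps is $L$-Lipschitz, hence $\gamma$ is in particular rectifiable.

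Next I would establish the length inequality $\ell(\gamma)\le\liminf\ell(\gamma_n)$. For any partition $0=t_0<t_1<\cdots<t_k=1$, continuity of $d$ and pointwise convergence give
\[
\sum_{i=1}^{k} d\bigl(\gamma(t_{i-1}),\gamma(t_i)\bigr)=\lim_{n\to\infty}\sum_{i=1}^{k} d\bigl(\gamma_n(t_{i-1}),\gamma_n(t_i)\bigr)\le\liminf_{n\to\infty}\ell(\gamma_n),
\]
since each partition sum for $\gamma_n$ is bounded by $\ell(\gamma_n)$. Taking the supremum over partitions yields the desired bound on $\ell(\gamma)$.

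Finally, for the Pompeiu--Hausdorff convergence, let $\e_n=\sup_{t\in[0,1]} d\bigl(\gamma_n(t),\gamma(t)\bigr)$, which tends to $0$ by uniform convergence. For any $y=\gamma_n(t)\in\gamma_n([0,1])$ the point $\gamma(t)\in\gamma([0,1])$ lies within $\e_n$ of $y$, and conversely any $x=\gamma(t)\in\gamma([0,1])$ is at distance at most $\e_n$ from $\gamma_n(t)\in\gamma_n([0,1])$; hence $d_H\bigl(\gamma_n([0,1]),\gamma([0,1])\bigr)\le\e_n\to0$. The only mildly subtle point is the lower semicontinuity of length, but it reduces, as above, to approximating $\ell(\gamma)$ by partition sums and passing to the limit; no compactness of $X$ is needed beyond what has already been used for Arzelà--Ascoli.
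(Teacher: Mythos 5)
Your proof is correct and follows essentially the same route as the paper: equicontinuity from the arc-length parametrization plus Arzel\`a--Ascoli for the convergent subsequence, lower semicontinuity of length (which the paper simply cites from Burago--Burago--Ivanov rather than proving via partition sums), and the observation that uniform convergence immediately gives Pompeiu--Hausdorff convergence of the images.
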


\begin{proof}
The choice of the parameter and the fact that $\left\{  \mathcal{\ell}\left(
\gamma_{n}\right)  \right\}  _{n}$ is bounded imply that $\gamma_{n}$ are
equi-continuous, hence the first statement follows from Ascoli's theorem. The
second statement is nothing but the semi-continuity of length (see for example
\cite[2.3.4.iv]{BBI}). The third statement is an obvious consequence of the
first one.
\end{proof}

If $P$ is a subset of a metric space $Z$ and $\rho$ a positive number, we
denote by $N_{\rho}\left(  P\right)  $ the $\rho$-neighbourhood of $P$ in $Z$,
namely
\[
N_{\rho}\left(  P\right)  =\left\{  x\in Z|\exists y\in P~d^{Z}\left(
x,y\right)  \leq\rho\right\}  \text{.}%
\]

We end this section with a notion of stability for simple closed geodesics,
which is essential in our proofs.

\begin{dfn}
Let $A\in\mathcal{A}(\kappa)$. A simple closed geodesic $G$ of $A$ is said to
be \emph{stable} if for any isometric embedding $\phi:A\rightarrow Z$ in any
metric space $Z$, and for any positive number $\delta$, there exists $\eta>0$
such that for any $A^{\prime}\in\mathcal{A}(\kappa)$ included in $Z$, if
$d_{H}^{Z}\left(  \phi\left(  A\right)  ,A^{\prime}\right)  \leq\eta$ then
there exists a simple closed geodesic $G^{\prime}$ in $A^{\prime}$ such that
$d_{H}^{Z}\left(  \phi\left(  G\right)  ,G^{\prime}\right)  \leq\delta$.
\end{dfn}


\section{A curvature argument}

\label{neg}

We recall first the \emph{Poincar\'{e}'s disc model} of $\mathbb{M}_{-1}$. It
consists of the standard open disk
\[
\mathbb{P}=\left\{  \left(  x,y\right)  |x^{2}+y^{2}<1\right\}
\]
endowed with the distance
\[
d_{\mathbb{P}}\left(  u,v\right)  =\mathrm{arccosh}\left(  1+\mathfrak{p}%
(u,v)\right)  \text{,}%
\]
where
\begin{equation}
\mathfrak{p}(u,v)=\frac{2\left\Vert u-v\right\Vert ^{2}}{\left(  1-\left\Vert
u\right\Vert ^{2}\right)  \left(  1-\left\Vert v\right\Vert ^{2}\right)
}\text{,} \label{frac}%
\end{equation}
and $\left\Vert ~ \right\Vert $ is the standard Euclidean norm. In this model,
the geodesics are exactly the circular arcs normal to the disk boundary.

\begin{lm}
\label{L1} Let $Q=Q\left(  \lambda,\varepsilon\right)  $ ($\lambda>0$,
$\varepsilon>0$) be the geodesic quadrilateral of $\mathbb{P}$ whose vertices
are $\left(  \pm a,\pm b\right)  $, where $a$, $b$ are chosen such that the
distance (in $\mathbb{P}$) between the midpoint of the upper side $U$ of $Q$
(from $\left(  a,b\right)  $ to $\left(  -a,b\right)  $) and the midpoint of
the lower side $L$ of $Q$ (from $\left(  a,-b\right)  $ to $\left(
-a,-b\right)  $) is $\lambda$, and the distance between the midpoints of the
other two sides of $Q$ is $\varepsilon$.

i) The unique shortest path $\gamma_{0}$ in $Q$ from $L$ to $U$ is a segment
of the $y$-axis.

ii) There exists a positive number $\alpha=\alpha\left(  \lambda
,\varepsilon\right)  $ such that any path $\gamma$ from $L$ to $U$
intersecting either the left or the right side of $Q$ satisfies $\mathcal{\ell
}\left(  \gamma\right)  \geq\mathcal{\lambda}+\alpha$.
\end{lm}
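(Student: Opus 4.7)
My plan is to reduce both parts to two classical facts of hyperbolic geometry: (a) two ultraparallel geodesics admit a unique common perpendicular, which realizes the infimum of distance between them; and (b) the quadrilateral $Q$, having by symmetry four equal interior angles whose sum is less than $2\pi$, is geodesically convex in $\mathbb{P}$, so that $d_Q=d_{\mathbb{P}}$ on $Q$ and, in particular, the $\mathbb{P}$-geodesic between any two points of $Q$ stays in $Q$.

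For part (i), each of the arcs $U$ and $L$ is invariant under the reflection $(x,y)\mapsto(-x,y)$, so its midpoint lies on the $y$-axis; call them $M_U=(0,y_0)$ and $M_L=(0,-y_0)$. The $y$-axis itself is a diameter of $\mathbb{P}$, hence a geodesic, and the same reflection symmetry forces it to meet the geodesic extensions $U^{\ast}$ and $L^{\ast}$ of $U$ and $L$ perpendicularly, at $M_U$ and $M_L$ respectively. Thus the $y$-axis segment $[M_L,M_U]$ is a common perpendicular of $U^{\ast}$ and $L^{\ast}$, and its length equals $\lambda$ by the definition of $\lambda$. By (a) this common perpendicular is unique and realizes $d_{\mathbb{P}}(L^{\ast},U^{\ast})$; hence $d_{\mathbb{P}}(L,U)=\lambda$ and any path from $L$ to $U$ in $Q$ has length $\geq\lambda$, with equality only if it coincides with $[M_L,M_U]$.

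For part (ii), I argue by contradiction from (i). Let $R$ be one of the two vertical sides of $Q$. If a path $\gamma$ from $L$ to $U$ in $Q$ passes through a point $p\in R$, then, splitting $\gamma$ at $p$ and using the triangle inequality,
\[
\ell(\gamma)\geq d_{\mathbb{P}}(p,L)+d_{\mathbb{P}}(p,U)=:\phi(p).
\]
Were $\phi(p)=\lambda$ for some $p\in R$, then concatenating the $\mathbb{P}$-minimizers from $p$ to $L$ and from $p$ to $U$—both of which stay in $Q$ by the convexity noted in (b)—would give a path in $Q$ from $L$ to $U$ of length $\lambda$ passing through a point off the $y$-axis, contradicting the uniqueness in (i). Hence $\phi>\lambda$ pointwise on $R$. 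Since $\phi$ is continuous and $R$ is compact, $\phi$ attains a minimum on $R$ strictly greater than $\lambda$; repeating for the other vertical side and setting $\alpha>0$ equal to the smaller of the two positive gaps yields the desired $\alpha=\alpha(\lambda,\varepsilon)$.

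The main obstacle lies in the \emph{uniqueness} half of part (i): the symmetry argument immediately exhibits $[M_L,M_U]$ as one minimizer, but without uniqueness another minimizer could approach $R$ and drag $\alpha$ to $0$ in part (ii). It is exactly the sharpness of the common perpendicular between ultraparallel hyperbolic geodesics that pins $\alpha$ strictly away from $0$.
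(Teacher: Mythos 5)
Your proof is correct. For part \emph{(ii)} you argue essentially as the paper does: both proofs reduce the claim to the strict positivity, on a compact set, of a continuous ``detour'' functional --- your $\phi(p)=d_{\mathbb{P}}(p,L)+d_{\mathbb{P}}(p,U)$ on the side $R$ versus the paper's $f(l,u,r)=d_{\mathbb{P}}(l,r)+d_{\mathbb{P}}(r,u)$ on $L\times U\times R$ --- with strictness supplied by the uniqueness in part \emph{(i)}; the paper merely phrases this as a contradiction via a minimizing sequence. For part \emph{(i)} your route is genuinely different. The paper works directly with the model's distance formula $d_{\mathbb{P}}(u,v)=\mathrm{arccosh}\left(1+\mathfrak{p}(u,v)\right)$: after using convexity of $Q$ to reduce to geodesic segments between $l\in L$ and $u\in U$, it observes that the denominator of $\mathfrak{p}(l,u)$ is maximized, and the numerator simultaneously minimized, exactly when $l$ and $u$ are the midpoints on the $y$-axis. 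You instead invoke the synthetic fact that two ultraparallel geodesics admit a unique common perpendicular realizing their distance, identified here with the $y$-axis by reflection symmetry (one should note, as you implicitly do, that a length-$\lambda$ path from $L$ to $U$ also realizes the distance between the full geodesics $L^{\ast}$ and $U^{\ast}$, hence must be that common perpendicular). The paper's computation is shorter and self-contained given the model already set up; your argument is model-independent, makes the source of the uniqueness --- and hence of the gap $\alpha$ --- more transparent, and you even justify the convexity of $Q$ via the angle criterion, which the paper asserts without proof.
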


\begin{proof}
\textit{(i)} $Q$ is convex in $\mathbb{P}$, whence $\gamma_{0}$ is a geodesic
segment from $l\in L$ to $u\in U$. In order to maximize the denominator of
$\mathfrak{p}\left(  l,u\right)  $ in Formula (\ref{frac}), we have to chose
$l$ and $u$ on the $y$-axis. This condition also minimizes the numerator,
whence the conclusion.

\textit{(ii)} Assume the conclusion fails. So there exists a sequence
$\left\{  \gamma_{n}\right\}  _{n}$ of curves from $l_{n}\in L$ to $u_{n}\in
U$ via a point $r_{n}$ on (say) the right side $R$, such that $\mathcal{\ell
}\left(  \gamma_{n}\right)  \rightarrow\mathcal{\ell}\left(  \gamma
_{0}\right)  $. Let $m$ be the minimal value of the function $f:L\times
U\times R\rightarrow\mathbb{R}$ given by $\left(  l,u,r\right)  \mapsto
d_{\mathbb{P}}\left(  l,r\right)  +d_{\mathbb{P}}\left(  r,u\right)  $. By
\textit{(i)}, $m>\mathcal{\ell}\left(  \gamma_{0}\right)  $. On the other
hand, $\mathcal{\ell}\left(  \gamma_{n}\right)  \geq f\left(  l_{n}%
,r_{n},u_{n}\right)  \geq m$, whence $\mathcal{\ell}\left(  \gamma_{0}\right)
\geq m$ and we get a contradiction.
\end{proof}

We shall denote by $C\left(  \lambda,\varepsilon\right)  $ the manifold with
boundary obtained from the quadrilateral $Q\left(  \lambda,\varepsilon\right)
$ in Lemma \ref{L1} by gluing $L$ onto $U$, right onto right, and left onto
left. The segment which was the $y$-axis in $\mathbb{P}$ becomes after gluing
a simple closed geodesic. We call it the \emph{soul} of $C\left(
\lambda,\varepsilon\right)  $.

\begin{lm}
\label{stable} If $A\in\mathcal{A}(-1)$ contains a region $C$ isometric to
$C(\lambda,\varepsilon)$ for some $\lambda,\varepsilon>0$, then the soul of
$C$ is a stable simple closed geodesic.
\end{lm}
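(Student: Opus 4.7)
The plan is to argue by contradiction, producing in surfaces $A_n\in\mathcal{A}(-1)$ Hausdorff-close to $\phi(A)$ a length-minimizing closed curve that must converge to $\phi(G)$ while remaining a simple closed geodesic of $A_n$. So suppose there exist an isometric embedding $\phi\colon A\to Z$, a number $\delta>0$, and a sequence $(A_n)$ of surfaces in $\mathcal{A}(-1)$ included in $Z$ with $\eta_n:=d_H^Z(\phi(A),A_n)\to 0$ such that no simple closed geodesic of $A_n$ lies within Hausdorff distance $\delta$ of $\phi(G)$.

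First I would construct test curves $\sigma_n\subset A_n$ close to $\phi(G)$. Choose $k_n\to\infty$ with $k_n\eta_n\to 0$, pick $g_{n,i}\in A_n$ within $\eta_n$ of $\phi(G(i\lambda/k_n))$ for $i=0,\ldots,k_n$, and connect consecutive points by shortest paths in $A_n$; since both $\phi$ and the inclusion $A_n\hookrightarrow Z$ are isometric, each such segment has length at most $2\eta_n+\lambda/k_n$, so $\ell(\sigma_n)\leq\lambda+2k_n\eta_n\to\lambda$ and $d_H^Z(\sigma_n,\phi(G))\to 0$. Now fix $\rho\in(0,\delta)$ small enough that $\phi(A)\cap\overline{N_\rho(\phi(G))}$ is an annulus lying strictly inside $\phi(\mathrm{int}\,C)$, disjoint from $\phi(\partial C)$. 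By Perel'man's stability theorem (Lemma \ref{LP}), applied via nearest-point projections (which have distortion at most $2\eta_n$), the set $U_n:=A_n\cap\overline{N_\rho(\phi(G))}$ is a topological annulus for $n$ large, and $\sigma_n\subset U_n$ generates $\pi_1(U_n)\cong\mathbb{Z}$. Minimize $\ell$ over closed rectifiable curves in $U_n$ freely homotopic to $\sigma_n$: existence of a minimizer $\gamma_n$ with $\ell(\gamma_n)\leq\ell(\sigma_n)$ follows from Lemma \ref{LA} and the semi-continuity of length, and we may assume $\gamma_n$ is simple because cutting at any self-intersection only shortens it without changing the primitive homotopy class.

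The heart of the argument is convergence and rigidity. Parametrize $\gamma_n$ proportionally to arc length; by Lemma \ref{LA} a subsequence converges uniformly to a rectifiable closed curve $\gamma_\infty\subset\phi(A)\cap\overline{N_\rho(\phi(G))}\subset\phi(C)$, with $\ell(\gamma_\infty)\leq\liminf\ell(\gamma_n)\leq\lambda$ and still winding once around the cylinder $\phi(C)$ (the winding number is preserved under uniform convergence inside the annular region). By the choice of $\rho$, $\gamma_\infty$ is disjoint from $\phi(\partial C)$, so Lemma \ref{L1}(ii) does not apply; unfolding $\gamma_\infty$ across the seam $L=U$ of $C$ yields a path in $Q$ from $L$ to $U$ of length $\leq\lambda$ that avoids the left and right sides of $Q$, and Lemma \ref{L1}(i) forces it to be $\gamma_0$. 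Hence $\gamma_\infty=\phi(G)$, so $\gamma_n\to\phi(G)$ in Hausdorff distance, which means eventually $\gamma_n\subset\mathrm{int}\,U_n$; being a local length-minimizer in $A_n$ (any shortcut in $A_n$ remains in $U_n$ for $n$ large), each such $\gamma_n$ is a genuine simple closed geodesic of $A_n$ with $d_H^Z(\gamma_n,\phi(G))<\delta$, contradicting our standing assumption.

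The main technical difficulty is ensuring that the variational minimizer sits in the interior of $U_n$ rather than sliding onto $\partial U_n$: this is exactly where both parts of Lemma \ref{L1} become indispensable, with (i) pinning down the unique limiting shape as the soul, and (ii) providing the positive length gap $\alpha(\lambda,\varepsilon)$ that (via the Perel'man-based identification of $\partial U_n$ with a region near the two boundary circles of $\phi(C)$) rules out boundary-hugging competitors from the outset.
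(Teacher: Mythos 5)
Your overall scheme---minimize length in the non-contractible class near the soul, extract a limit, identify it with $\phi(G)$ via the uniqueness in Lemma \ref{L1}(i), and conclude that the minimizer eventually lies in the interior and is therefore a genuine closed geodesic---is close in spirit to the paper's, but the two steps in which you transfer topological information between $A_n$, $\phi(A)$ and the ambient space $Z$ are genuine gaps. First, $U_n:=A_n\cap\overline{N_\rho(\phi(G))}$ is the intersection of $A_n$ with a metric neighbourhood taken in $Z$; Perel'man's stability theorem (Lemma \ref{LP}) produces a homeomorphism $h_n:A\rightarrow A_n$ moving points by $o(1)$, but it gives no control on the topology of such a sublevel set of $x\mapsto d^Z(x,\phi(G))$ restricted to $A_n$: this set is merely sandwiched between two images under $h_n$ of metric collars in $\phi(A)$, and need not be an annulus, nor connected, nor a manifold with boundary. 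Hence the assertions that $\pi_1(U_n)\cong\mathbb{Z}$, that $\sigma_n$ generates it, and that a (simple) minimizer exists in a free homotopy class of $U_n$ are unjustified; the repair is to minimize inside $h_n(C')$ for a fixed thinner collar $C'\subset C$, which is an annulus by construction. Second, and more seriously, the claim that ``the winding number is preserved under uniform convergence inside the annular region'' compares curves $\gamma_n\subset A_n$ with a limit $\gamma_\infty\subset\phi(A)$: these lie in different surfaces, and their only common home $Z$ is an arbitrary compact metric space in which $N_\rho(\phi(G))$ carries no annulus structure and no winding number. Non-contractibility therefore does not automatically pass to the limit, and without it the identification $\gamma_\infty=\phi(G)$ collapses. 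This is precisely the difficulty the paper's proof is engineered to avoid: it encodes the homotopy class by the Hausdorff-closed condition of being a union of two arcs, one in $h_n(C^1)$ and one in $h_n(C^2)$, joining a point of $h_n(V)$ to a point of $h_n(W)$, a condition that visibly survives the passage to the limit.

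Two further remarks. In the argument as you actually wrote it, Lemma \ref{L1}(ii) is never used: the identification of $\gamma_\infty$ rests on the uniqueness in part (i), and the contradiction comes from ``eventually interior $\Rightarrow$ geodesic''. Were the topological gaps above filled (say by working in $h_n(C')$ and pulling $\gamma_n$ back by $h_n^{-1}$ to check non-contractibility), this would be a legitimate and arguably cleaner variant of the paper's proof, which instead uses the quantitative gap $\alpha(\lambda,\varepsilon')$ of part (ii) against explicitly built competitors of length $\ell(G)+o(1)$; your closing paragraph, presenting (ii) as indispensable, misdescribes your own argument. Finally, unfolding a non-contractible closed curve of length at most $\lambda$ across the seam of $C$ requires a word about multiple crossings (lift to the universal cover of the cylinder), and the convergence of the full sequence $\gamma_n$, rather than of a subsequence, should be recorded via the standard subsequence-of-subsequences argument.
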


\begin{figure}[ptb]
\begin{center}
\includegraphics[
natheight=2.909100in,
natwidth=3.878800in,
height=1.7737in,
width=2.3557in
]{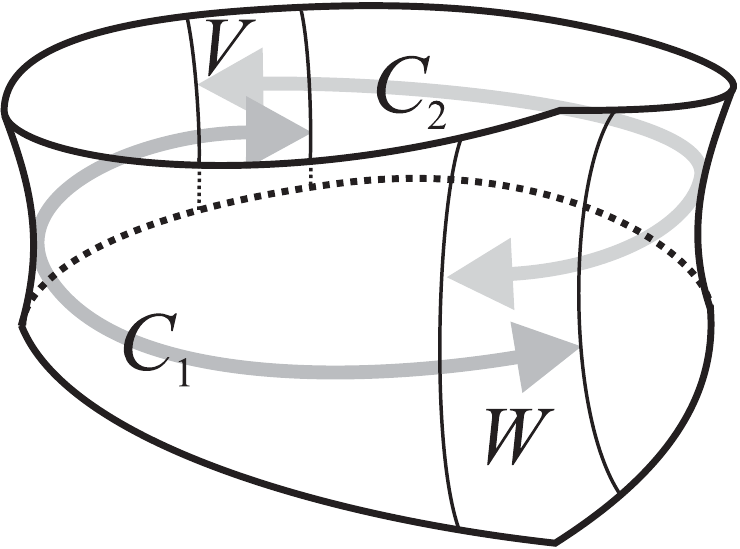}
\end{center}
\caption{Definition of $C_{1}$, $C_{2}$, $V$ and $W$ in the proof of Lemma
\ref{stable}.}%
\label{F1}%
\end{figure}

\begin{proof}
Let $G$ be the soul of $C=C\left(  \varepsilon,l\right)  $, with $C\subset A$.
Let $\phi:A\rightarrow Z$ be an isometric embedding of $A$ in some metric
space $Z$ and put $B=\phi\left(  A\right)  $. Choose $\delta>0$. Assume that
the result does not hold, hence there exists a sequence $\left\{
B_{n}\right\}  _{n}$ of Alexandrov surfaces isometrically embedded in $Z$ such
that $\nu_{n}\overset{\mathrm{def}}{=}d_{H}^{Z}\left(  B,B_{n}\right)  $ tends
to $0$, and $B_{n}$ has no simple closed geodesic $G^{\prime}$ with $d_{H}%
^{Z}\left(  G,G^{\prime}\right)  \leq\delta$. Define functions $f_{n}%
:B\rightarrow B_{n}$ (not necessary continuous) such that $d\left(
x,f_{n}\left(  x\right)  \right)  \leq\nu_{n}$; this is possible, by the
definition of the Pompeiu-Hausdorff distance.

By Lemma \ref{LP}, there exists a sequence of positive numbers $o_{n}$
convergent to $0$ such that, for large $n$, a homeomorphism $h_{n}%
:B\rightarrow B_{n}$ exists and satisfies $d\left(  h_{n}\left(  x\right)
,f_{n}\left(  x\right)  \right)  \leq o_{n}$. Hence $d_{n}\overset
{\mathrm{def}}{=}\mathrm{dis}\left(  h_{n}\right)  \leq\nu_{n}+2o_{n}%
\underset{_{n\rightarrow\infty}}{\longrightarrow}0$, and for all $x\in Z$ we
have $d^{Z}\left(  h_{n}\left(  x\right)  ,x\right)  \leq\nu_{n}%
+o_{n}\underset{_{n\rightarrow\infty}}{\longrightarrow}0$.

Let $\varepsilon^{\prime}$ be small enough to ensure that $C^{\prime}%
\overset{\mathrm{def}}{=}C\left(  \varepsilon^{\prime},l\right)  $ is included
in $N_{\delta/2}\left(  G\right)  $. For $n$ large enough, $h_{n}\left(
C^{\prime}\right)  \subset N_{\delta}\left(  G\right)  $.

Define two closed subset $C^{1}$, $C^{2}$ of $C^{\prime}$, delimited by
geodesics normal to $G$, such that $C^{\prime}=C_{1}\cup C_{2}$ and $C_{1}\cap
C_{2}$ is homeomorphic to the union of two closed ball, say $V$ and $W$ (see
Figure \ref{F1}). Let $\mathcal{K}$ (resp. $\mathcal{K}_{n}$) be the set of
those closed curves $\mathbb{R}/\mathbb{Z\rightarrow}B$ (resp. $\mathbb{R}%
/\mathbb{Z\rightarrow}B_{n}$), parametrized proportionally to the arc-length,
of length less than $2\mathcal{\ell}\left(  G\right)  $, and union of two arcs
from $v\in V$ (resp. $v_{n}\in h_{n}\left(  V\right)  $) to $w\in W$ (resp.
$w_{n}\in h_{n}\left(  W\right)  $), one of them lying in $C^{1}$ (resp.
$h_{n}\left(  C^{1}\right)  $) and the other in $C^{2}$ (resp. $h_{n}\left(
C^{2}\right)  $). By Lemma \ref{LA}, $\mathcal{K}_{n}$ is compact and there
exists a shortest curve $S_{n}:\mathbb{R}/\mathbb{Z\rightarrow}h_{n}\left(
C^{\prime}\right)  $ in $\mathcal{K}_{n}$. It is clear that, for $n$ large
enough, $d_{H}^{Z}\left(  \operatorname{Im}S_{n},G\right)  \leq\delta$. By our
assumption, $S_{n}$ is not a geodesic, and therefore intersects the boundary
of $h_{n}\left(  C^{\prime}\right)  $.

Assume (by passing to a subsequence, if necessary) that $S_{n}$ converges to
some closed curve $S\in\mathcal{K}$; then $S$ touches $\partial C^{\prime}$
and is not contractible in $C$. It follows (by Lemma \ref{L1}) that
$\mathcal{\mathcal{\ell}}\left(  S\right)  \geq\mathcal{\ell}\left(  G\right)
+\alpha\left(  \lambda,\varepsilon^{\prime}\right)  $, and (by Lemma \ref{LA})
that $\mathcal{\ell}\left(  S_{n}\right)  \geq\mathcal{\ell}\left(  G\right)
+\alpha\left(  \lambda,\varepsilon^{\prime}\right)  /2$ for $n$ large enough.

Let $v$ be the midpoint of $G\cap V$ and $w$ be the midpoint of $G\cap W$. Let
$G^{i}$ ($i=1$, $2$) be the part of $G$ delimited by $u$ and $v$ which is
contained in $C^{i}$. Take points $x_{0}=u$, $x_{1}$, \ldots, $x_{N^{1}}=v$ on
$G^{1}$ such that
\[
\max_{i}d\left(  x_{i},x_{i+1}\right)  \leq\frac{1}{2}d\left(  G,\partial
C^{1}\right)  \text{.}%
\]
Let $G_{n}^{1}\subset B_{n}$ be the union of segments from $h_{n}\left(
x_{i-1}\right)  $ to $h_{n}\left(  x_{i}\right)  $ $(1\leq i\leq N^{1})$; for
large $n$, $G_{n}^{1}\subset h_{n}\left(  C^{1}\right)  $. Moreover,
\[
\mathcal{\ell}\left(  G_{n}^{1}\right)  =\sum_{i=1}^{N}d\left(  h_{n}\left(
x_{i-1}\right)  ,h_{n}\left(  x_{i}\right)  \right)  \leq\sum_{i=1}%
^{N}d\left(  x_{i-1},x_{i}\right)  +N^{1}d_{n}\leq\mathcal{\ell}\left(
G^{1}\right)  +N^{1}d_{n}\text{.}%
\]
Similarly, one can construct $G_{n}^{2}\subset h_{n}\left(  C^{2}\right)  $.
The length of $G_{n}\overset{\mathrm{def}}{=}G_{n}^{1}\cup G_{n}^{2}$ is at
most $\mathcal{\ell}\left(  G\right)  +\left(  N^{1}+N^{2}\right)  d_{n}$. On
the other hand, $G_{n}\in\mathcal{K}_{n}$, whence $\mathcal{\ell}\left(
G_{n}\right)  \geq\mathcal{\ell}\left(  S_{n}\right)  \geq\mathcal{\ell
}\left(  G\right)  +\alpha\left(  \lambda,\varepsilon^{\prime}\right)  /2$,
and we get a contradiction.
\end{proof}

\begin{thm}
\label{TNC} Most surfaces in $\mathcal{A}(-1)$ have infinitely many,
non-intersecting, simple closed geodesics of bounded length.
\end{thm}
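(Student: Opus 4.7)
The strategy is a Baire category argument. Fix any $L > 0$ and, for each positive integer $n$, let $\mathcal{U}_n \subset \mathcal{A}(-1)$ denote the \emph{interior} of the set of surfaces admitting at least $n$ pairwise disjoint simple closed geodesics of length $\leq L$. Every $\mathcal{U}_n$ is open by definition, so once each is shown to be dense, $\bigcap_n \mathcal{U}_n$ will be a dense $G_{\delta}$, and any surface in this residual intersection will admit, for every $n$, at least $n$ pairwise disjoint simple closed geodesics of length $\leq L$ --- which is the theorem.

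Density of $\mathcal{U}_n$ would reduce to the following surgery claim: given $A \in \mathcal{A}(-1)$ and $\rho > 0$, produce $A' \in \mathcal{A}(-1)$ of the same topological type as $A$, with $d_{GH}(A, A') < \rho$, containing $n$ pairwise disjoint regions isometric to collars $C(\lambda_i, \varepsilon_i)$ with each $\lambda_i < L/2$. The $n$ souls would then be pairwise disjoint stable simple closed geodesics of length strictly less than $L/2$; Lemma \ref{stable} (combined with Lemma \ref{SEL} to pass between Gromov-Hausdorff and Pompeiu-Hausdorff distances) would then yield, in every surface sufficiently close to $A'$, $n$ simple closed geodesics near the souls that remain pairwise disjoint. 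A slight strengthening of Lemma \ref{stable} that is evident from its proof --- the inherited geodesic is built as a minimizer among closed curves of length $< 2\lambda_i$ in a fixed class --- shows these have length strictly less than $L$, placing $A'$ in the interior $\mathcal{U}_n$.

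I would carry out the surgery as follows. First, use Lemma \ref{Approximation} to approximate $A$ within $\rho/2$ by a $(-1)$-polyhedron $P$. Pick $n$ pairwise disjoint small geodesic disks $D_1, \dots, D_n \subset P$ in the interiors of the flat faces of $P$, each of diameter less than some small $r$. For each $i$, excise $D_i$ and glue in a replacement $\widetilde D_i$ built from a copy of $C(\lambda_i, \varepsilon_i)$ (with $\lambda_i, \varepsilon_i \ll r$) by capping one of its two boundary circles with a piece of $\mathbb{M}_{-1}$ (possibly through a cone point of angle less than $2\pi$) and attaching to the other a thin hyperbolic annulus whose outer boundary matches $\partial D_i$ in length. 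Each $\widetilde D_i$ is a topological disk of intrinsic diameter $O(r)$. By Alexandrov's gluing theorem (Lemma \ref{gluing}), as long as the $2\pi$ angle condition is respected at each glued vertex, the surgered surface $A'$ lies in $\mathcal{A}(-1)$ and has the same topological type as $P$ (the Euler characteristic is unchanged, since we replace disks by disks). Because only $n$ disjoint regions of diameter $O(r)$ are modified, $d_{GH}(P, A') = O(r) < \rho/2$ for $r$ small enough.

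The principal obstacle is precisely this surgery: building $\widetilde D_i$ so that it is simultaneously a topological disk of small intrinsic diameter, contains an isometric copy of $C(\lambda_i, \varepsilon_i)$, has geodesic boundary matching $\partial D_i$ in length, and glues to $P$ without violating Alexandrov's $2\pi$ angle condition, is the only delicate point. Once it is in place, the Baire machinery and the stability provided by Lemma \ref{stable} will close the proof.
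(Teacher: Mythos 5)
Your Baire-category skeleton (openness via stable souls and Lemmas \ref{SEL}, \ref{stable}; density via a gluing surgery) is the same as the paper's, but the surgery you propose for the density step --- precisely the step you flag as the only delicate point --- cannot be carried out, and the obstruction is Gauss--Bonnet. In your gadget $\widetilde D_i$ the soul becomes a \emph{contractible} simple closed geodesic: it bounds the disk $D^{\mathrm{soul}}$ formed by the cap together with half of the collar and, the soul being a smooth closed geodesic, Gauss--Bonnet forces the curvature measure to satisfy $\omega\left(D^{\mathrm{soul}}\right)=2\pi$. (Already here your parenthetical fails: one cone point of angle $\theta\in\left]0,2\pi\right[$ carries curvature $2\pi-\theta<2\pi$, while the hyperbolic background and the half-collar contribute negatively.) On the other hand, replacing $D_i$ by $\widetilde D_i$ leaves $\mathrm{int}\left(P\setminus D_i\right)$ untouched, preserves $\chi$ and hence the total curvature $2\pi\chi$, and by the angle condition of Lemma \ref{gluing} can only add \emph{nonnegative} curvature along the seam; therefore $\omega\left(\mathrm{int}\,\widetilde D_i\right)\leq\omega\left(D_i\right)=-\mathrm{Area}\left(D_i\right)<0$. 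Consequently $\omega\left(\mathrm{int}\,\widetilde D_i\setminus D^{\mathrm{soul}}\right)\leq-2\pi-\mathrm{Area}\left(D_i\right)$, and since your gadget is assembled from pieces of $\mathbb{M}_{-1}$ and cone points (so $\omega\geq-\mathrm{Area}$ on every Borel subset), the part of $\widetilde D_i$ outside $D^{\mathrm{soul}}$ must have area greater than $2\pi$. By volume comparison in curvature $\geq-1$, a set of area greater than $2\pi$ has diameter at least $\mathrm{arccosh}\,2$; worse, since $\partial\widetilde D_i$ has the small length of $\partial D_i$, the gadget must contain points at distance on the order of $\log(1/r)$ from its boundary. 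So $\widetilde D_i$ cannot have diameter $O(r)$, and $d_{GH}(P,A')$ does not tend to $0$: one simply cannot manufacture a contractible simple closed geodesic inside an arbitrarily small disk of a surface with curvature bounded below.

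The paper avoids this by never creating the required $2\pi$ of curvature from scratch. It approximates $A$ by a smooth Riemannian surface $R$ (Lemma \ref{Approximation}), takes a simple closed geodesic $G$ of $R$, which exists by classical results, cuts $R$ along $G$ --- so that each side of the cut already carries the curvature that Gauss--Bonnet demands --- and inserts the $p$ copies of $C(\lambda,\varepsilon)$ in a row along the cut, choosing $\lambda$ so that the boundary lengths match; a separate closing-up handles the case where a neighbourhood of $G$ is a M\"obius strip. To repair your argument you must replace the local excision by a surgery along a global simple closed geodesic of the approximating surface; this is exactly where the Riemannian existence theorems enter, and it also explains why the resulting length bound depends on $A$ rather than being a uniform $L$ fixed in advance.
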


\begin{proof}
Let $\mathcal{G}_{p}\subset\mathcal{A}\left(  -1\right)  $ be the set of all
Alexandrov surfaces which admit at least $p$ non-intersecting simple closed
geodesics, and let $\mathcal{S}_{p}\subset\mathcal{G}_{p}$ be the set of all
Alexandrov surfaces which admit at least $p$ non-intersecting stable simple
closed geodesics.

We claim that $\mathcal{S}_{p}\subset\mathrm{int}\mathcal{G}_{p}$. Choose
$A\in\mathcal{S}_{p}$; we have to prove that for any sequence $A_{n}%
\in\mathcal{A}\left(  -1\right)  $ converging to $A$, $A_{n}$ belongs to
$\mathcal{G}_{p}$ for large $n$. By Lemma \ref{SEL}, we can assume that the
surfaces $A_{n}$, $A$ are all included in the same metric space $Z$. Let
$G_{1}$, \ldots, $G_{p}$ be $p$ non-intersecting and stable simple closed
geodesics of $A$. Put
\[
\delta=\frac{1}{3}\min_{1\leq i<j<p}\min_{\left(  x,y\right)  \in G_{i}\times
G_{j}}d\left(  x,y\right)  \text{.}%
\]
For $n$ large enough, there exists on $A_{n}$ a simple closed geodesic
$G_{i}^{n}$, lying in $N_{\delta}\left(  G_{i}\right)  $ ($i=1,...,p$), and
the geodesics $G_{1}^{n}$, $G_{2}^{n}$, ..., $G_{p}^{n}$ are non-intersecting
by the choice of $\delta$. This proves the claim.

We now claim that $\mathcal{S}_{p}$ is dense in $\mathcal{A}\left(  -1\right)
$. By Lemma \ref{Approximation}, it suffices to approximate every Riemannian
surface $R$ with surfaces in $\mathcal{S}_{p}$. $R$ admits at least one simple
closed geodesic $G$. A small neighbourhood of $G$ is homeomorphic to either a
cylinder or a M\"{o}bius strip.

Assume first that we are in the former case. Cutting $R$ along $G$ yields a
manifold $R^{\prime}$ whose boundary $\partial R^{\prime}$ consists of two
topological circles. For small $\varepsilon>0$, one can chose $\lambda$ such
that the boundary of $C\left(  \lambda,\varepsilon\right)  $ is isometric to
the boundary of $R^{\prime}$. Hence we can glue $p$ copies of $C_{1}$, \ldots,
$C_{p}$ of $C\left(  \lambda,\varepsilon\right)  $ between the two circles of
$\partial R^{\prime}$: one circle of $\partial R^{\prime}$ is glued to the
left side of $C_{1}$, the right side of $C_{i}$ ($1\leq i<p$) is glued on the
left side of $C_{i+1}$, and the right side of $C_{p}$ is glued on the other
circle of $\partial R^{\prime}$. By Lemmas \ref{gluing} and \ref{stable}, the
obtained surface belongs to $\mathcal{S}_{p}$, and for small $\varepsilon$ it
is close to $R$.

Assume now that a neighbourhood of $G$ is a M\"{o}bius strip, hence $\partial
R^{\prime}$ consists of one topological circle of length $2\mathcal{\ell
}\left(  G\right)  $. For small $\varepsilon$, we can choose $\lambda$ such
that each boundary component of $C\left(  \lambda,\varepsilon\right)  $ has
length $2\mathcal{\ell}\left(  G\right)  $. Glue successively $p$ copies of
$C_{1}$, \ldots, $C_{p}$ of $C\left(  \lambda,\varepsilon\right)  $ onto
$\partial R^{\prime}$: the left side of $C_{1}$ on $\partial R^{\prime}$ and
the right side of $C_{i}$ on the left side of $C_{i+1}$ ($1\leq i<p$). The
obtained surface still has a boundary, namely the right side of $C_{p}$. Glue
it on itself by identifying pairs of \textquotedblleft opposite
points\textquotedblright\ (\ie, points which are separating the boundary in
two arcs of length $\mathcal{\ell}\left(  G\right)  $). The obtained surface
belongs to $\mathcal{S}_{p}$ (by Lemmas \ref{gluing} and \ref{stable}) and is
closed to $R$. This proves the second claim.

It follows that $\mathrm{int}\left(  \mathcal{G}_{p}\right)  $ is open and
dense in $\mathcal{A}\left(  -1\right)  $, and
\begin{align*}
\mathcal{G}  &  =\set(:A\in\mathcal{A}\left(  -1\right)  |%
\begin{array}
[c]{l}%
A\text{~has infinitely many simple closed }\\
\text{geodesics pairwise non-intersecting}%
\end{array}
:)\\
&  \supset\bigcap_{p\in\mathbb{N}}\mathrm{int}\left(  \mathcal{G}_{p}\right)
\end{align*}
is residual in $\mathcal{A}\left(  -1\right)  $.

It is obvious from the above argument that the lengths of geodesics are bounded.
\end{proof}


\section{A topological argument}

\label{SRP2}

In the previous section we have proven the existence of simple closed
geodesics using a topology-free argument. In this section we shall use a
topology-based argument, which essentially does not depend on the curvature
bound. The case of $\mathcal{A}\left(  -1,1\right)  $ is covered by both
Section \ref{neg} and Section \ref{SRP2}.

The proof of the following easy lemma is left to the reader.

\begin{figure}[ptb]
\begin{center}
\includegraphics[scale=.5]{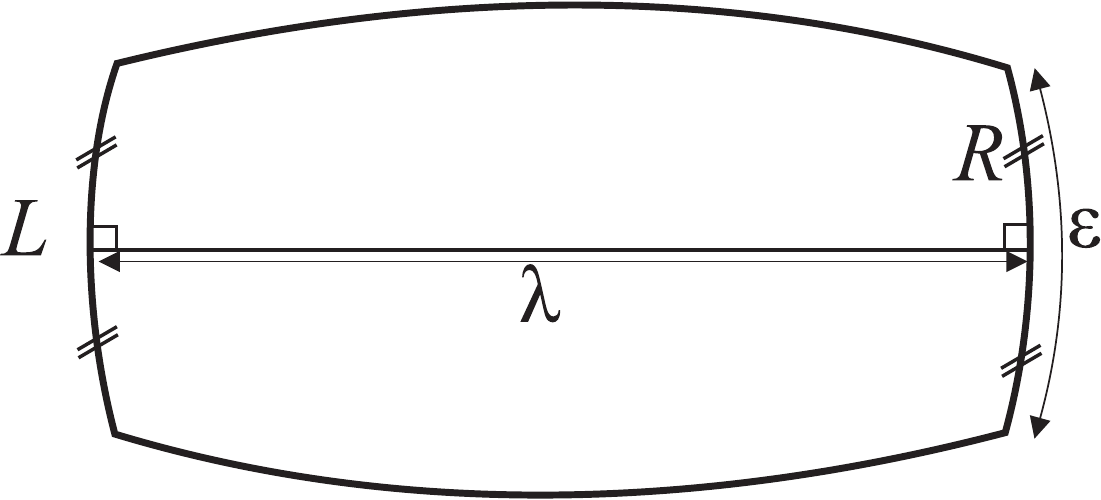}
\end{center}
\caption{Definition of $Q_{\kappa}(\lambda,\varepsilon)$ in Lemma \ref{LQK}.}%
\label{F4}%
\end{figure}

\begin{lm}
\label{LQK} Let $Q_{\kappa}=Q_{\kappa}\left(  \lambda,\varepsilon\right)  $ be
a geodesic quadrilateral in $\mathbb{M}_{\kappa}$ defined as in Figure
\ref{F4} ($\kappa=0$,$\pm1$), let $L$ be its left side and $R$ be its right
side. If $\kappa=1$ assume, moreover, that $\lambda<\pi$. Denote by $s$ the
symmetry with respect of its center.

The shortest curve from $x\in L$ to $s\left(  x\right)  $ is the segment
between the midpoints of $L$ and $R$. Moreover, there exists a positive number
$\beta=\beta\left(  \lambda,\varepsilon\right)  $ such that any curve from $x$
to $s\left(  x\right)  $ which touches either the upper or the lower side of
$Q_{\kappa}$ has a length of at least $\lambda+\beta$.
\end{lm}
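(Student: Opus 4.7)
The plan is to mirror the structure of Lemma \ref{L1}: prove \textit{(i)} by exploiting the central symmetry $s$ together with the reflection symmetries of $Q_{\kappa}$, and then prove \textit{(ii)} by a compactness-plus-contradiction argument that uses \textit{(i)}.

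For \textit{(i)}, let $O$ denote the center of $Q_{\kappa}$, \ie, the unique fixed point of $s$, and write $m_{L}$ and $m_{R}=s(m_{L})$ for the midpoints of $L$ and $R$. When $\kappa=1$ the hypothesis $\lambda<\pi$ confines $Q_{\kappa}$ to a convex region of $\mathbb{M}_{1}$ in which minimizing geodesics between any two points are unique. For any $x\in L$, let $\gamma$ be the unique minimizing geodesic from $x$ to $s(x)$. The curve obtained by applying $s$ to $\gamma$ and reversing orientation is another minimizer between the same endpoints, so by uniqueness $s(\gamma)=\gamma$; the resulting isometric involution of the segment $\gamma$ has a unique fixed point, its midpoint, which must therefore coincide with $O$. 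Hence $d(x,s(x))=2d(x,O)$. Moreover $Q_{\kappa}$ is invariant under the reflection across the perpendicular bisector of $L$; this reflection fixes $O$ and preserves $L$ setwise, so the foot of the perpendicular from $O$ to $L$ is $m_{L}$. This gives $d(x,s(x))\geq 2d(m_{L},O)=d(m_{L},m_{R})=\lambda$, with equality iff $x=m_{L}$, in which case $\gamma$ is the geodesic segment $[m_{L},m_{R}]$.

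For \textit{(ii)}, argue by contradiction. If no such $\beta$ existed, there would be a sequence of rectifiable curves $\gamma_{n}$ from $x_{n}\in L$ to $s(x_{n})\in R$, each meeting (after passing to a subsequence) the upper side $U$ of $Q_{\kappa}$ at some point $p_{n}$, with $\mathcal{\ell}(\gamma_{n})\to\lambda$. Reparametrize each $\gamma_{n}$ proportionally to arc length on $[0,1]$ and invoke Lemma \ref{LA} to extract a further subsequence converging uniformly to a rectifiable curve $\gamma$ from $x\in L$ to $s(x)\in R$, meeting $U$ at a limit point $p$, with $\mathcal{\ell}(\gamma)\leq\liminf\mathcal{\ell}(\gamma_{n})=\lambda$. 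Part \textit{(i)} gives $\mathcal{\ell}(\gamma)\geq d(x,s(x))\geq\lambda$, forcing $\mathcal{\ell}(\gamma)=\lambda$, $x=m_{L}$, and $\gamma=[m_{L},m_{R}]$. But $[m_{L},m_{R}]$ lies on the perpendicular bisector of $L$ and therefore does not meet $U$, contradicting $p\in\gamma\cap U$. This produces a positive $\beta=\beta(\lambda,\varepsilon)$.

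The most delicate point is the verification of the two symmetries of $Q_{\kappa}$ used in \textit{(i)}: while the central symmetry $s$ is part of the statement, the reflection across the perpendicular bisector of $L$ relies on the fact that the quadrilateral in Figure \ref{F4} is defined by symmetric midpoint distances and so admits a dihedral symmetry group in $\mathbb{M}_{\kappa}$. This is automatic for $\kappa=0$ and standard for $\kappa=\pm 1$, and should be recorded explicitly.
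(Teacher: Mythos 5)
The paper gives no proof of Lemma \ref{LQK} (it is explicitly left to the reader), so the only benchmark is the proof of Lemma \ref{L1}, which your argument correctly adapts: part \textit{(i)} via the two symmetries of $Q_{\kappa}$ (the central symmetry forcing the minimizer through the center $O$, and the reflection identifying $m_{L}$ as the foot of the perpendicular from $O$ to $L$), and part \textit{(ii)} by compactness plus the strictness in \textit{(i)}. Both steps are sound; a marginally more elementary variant of \textit{(ii)}, closer to the paper's own treatment of Lemma \ref{L1}\textit{(ii)}, is to minimize $(x,u)\mapsto d(x,u)+d(u,s(x))$ over the compact set $L\times U$ and let $\beta$ be the excess of this minimum over $\lambda$, which avoids invoking Lemma \ref{LA}. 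Two small points worth recording explicitly: the convexity of $Q_{\kappa}$ (which you flag) is what lets you replace intrinsic shortest curves in $Q_{\kappa}$ by $\mathbb{M}_{\kappa}$-segments; and for $\kappa=1$ one should note that $\varepsilon$ is taken small enough that $\mathrm{diam}(Q_{1})<\pi$, so minimizers are unique and the fixed point of $s$ on $\gamma$ is $O$ rather than its antipode --- in the degenerate antipodal cases $d(x,s(x))=\pi>\lambda$, so the desired inequality survives in any event.
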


Let $M_{\kappa}\left(  \lambda,\varepsilon\right)  $ be the compact M\"{o}bius
strip obtained from the quadrilateral $Q_{\kappa}\left(  \lambda
,\varepsilon\right)  $ in Lemma \ref{LQK} by gluing the two $\varepsilon$ long
sides. The segment joining the midpoints of the $\varepsilon$ long sides
becomes a simple closed geodesic in $M_{\kappa}\left(  \lambda,\varepsilon
\right)  $; call it the \emph{soul} of $M_{\kappa}\left(  \lambda
,\varepsilon\right)  $.

\begin{lm}
\label{stable_M} If $A\in\mathcal{A}\left(  \kappa\right)  $ contains a subset
$M$ isometric to some $M_{\kappa}\left(  \lambda,\varepsilon\right)  $ then
its soul is stable.
\end{lm}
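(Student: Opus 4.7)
My plan is to transcribe the proof of Lemma \ref{stable} essentially verbatim, with the hyperbolic cylinder $C(\lambda,\varepsilon)$ replaced by the Möbius strip $M_\kappa(\lambda,\varepsilon)$, Lemma \ref{L1} replaced by Lemma \ref{LQK}, and one small new topological ingredient about cutting a Möbius strip. All the metric estimates remain essentially identical.

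Fix an isometric embedding $\phi:A\to Z$ and set $B=\phi(A)$. Arguing by contradiction, assume there are $\delta>0$ and surfaces $B_n\in\mathcal{A}(\kappa)$ isometrically embedded in $Z$ with $\nu_n:=d_H^Z(B,B_n)\to 0$ such that no $B_n$ contains a simple closed geodesic $G'$ with $d_H^Z(G,G')\leq\delta$. Choose $f_n:B\to B_n$ with $d(x,f_n(x))\leq\nu_n$ and invoke Lemma \ref{LP} to upgrade to homeomorphisms $h_n:B\to B_n$ with $d_n:=\mathrm{dis}(h_n)\to 0$ and $\sup_{x\in B}d^Z(h_n(x),x)\to 0$. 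Pick $\varepsilon'<\varepsilon$ with $M':=M_\kappa(\lambda,\varepsilon')\subset N_{\delta/2}(G)$; then $h_n(M')\subset N_\delta(G)$ for $n$ large.

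Split $M'$ into two closed pieces $M^1,M^2$ by cutting along two disjoint geodesic arcs normal to the soul $G$, so that $M^1\cap M^2$ is the union of two small closed balls $V$ and $W$ centred at points $v,w\in G$. The topological novelty is that cutting a Möbius strip along two disjoint boundary-to-boundary arcs produces two topological disks; hence $M^1$ and $M^2$ are simply connected. Define $\mathcal{K}$ (resp.\ $\mathcal{K}_n$) to be the set of closed curves $\mathbb{R}/\mathbb{Z}\to B$ (resp.\ $\to B_n$), parametrised proportionally to arc-length, of length less than $2\ell(G)$, consisting of two arcs from $V$ to $W$ (resp.\ from $h_n(V)$ to $h_n(W)$) with one arc in $M^1$ and the other in $M^2$ (resp.\ in their $h_n$-images). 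Since $M^1,M^2$ are disks, every element of $\mathcal{K}$ is freely homotopic in $M'$ to the soul $G$. By Lemma \ref{LA}, $\mathcal{K}_n$ is compact and contains a shortest element $S_n$; by the contradiction hypothesis $S_n$ cannot be a simple closed geodesic, so it must touch $\partial h_n(M')$.

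Passing to a subsequence, $S_n\to S\in\mathcal{K}$; the limit $S$ touches $\partial M'$ and is non-contractible. Unfolding $M'$ to the quadrilateral $Q_\kappa(\lambda,\varepsilon')$ and reading $S$ as a path from some $x\in L$ to $s(x)\in R$ meeting the upper or lower side of $Q_\kappa$, Lemma \ref{LQK} yields $\ell(S)\geq\ell(G)+\beta(\lambda,\varepsilon')$, and Lemma \ref{LA} then gives $\ell(S_n)\geq\ell(G)+\beta/2$ for $n$ large. The contradiction is obtained exactly as in Lemma \ref{stable}: sample $G\cap M^i$ by finitely many points $x_0^i,\ldots,x_{N^i}^i$ with $\max_j d(x_{j-1}^i,x_j^i)\leq\tfrac{1}{2}d(G,\partial M^i)$, push them through $h_n$, and join consecutive images by short geodesic segments to form polygonal approximations $G_n^i\subset h_n(M^i)$; then $G_n:=G_n^1\cup G_n^2\in\mathcal{K}_n$ has length at most $\ell(G)+(N^1+N^2)d_n\to\ell(G)$, contradicting $\ell(G_n)\geq\ell(S_n)\geq\ell(G)+\beta/2$. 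The only genuine obstacle is the two-disk decomposition of $M'$ together with the ensuing homotopy identification of elements of $\mathcal{K}$ with the soul class, without which Lemma \ref{LQK} would not be available for $S$.
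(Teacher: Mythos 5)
Your proof is correct and follows exactly the route the paper takes: the paper's own proof of Lemma \ref{stable_M} consists of invoking Lemma \ref{LQK} for the length gap $\beta(\lambda,\varepsilon)$ and then declaring the rest identical to the proof of Lemma \ref{stable}, which is precisely the transcription you carried out. Your explicit verification that the two normal cuts decompose the M\"obius strip $M'$ into two disks (so that every element of $\mathcal{K}$ is homotopic to the soul and Lemma \ref{LQK} applies to the limit curve $S$) is the one detail the paper leaves implicit, and you handle it correctly.
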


\begin{proof}
By Lemma \ref{LQK}, there exist $\beta=\beta\left(  \lambda,\varepsilon
\right)  $ such that each non-contractible curve $\gamma\subset M$
intersecting $\partial M$ is longer that $\lambda+\beta$. From now on, the
proof is the same as the proof of Lemma \ref{stable}.
\end{proof}

\begin{cor}
\label{cor} Let $A\in\mathcal{P}\left(  \kappa\right)  $ be homeomorphic to
$\mathbb{RP}^{2}$ ($\kappa\in\{-1, 0, 1\}$), and let $G$ be a non-contractible
simple closed geodesic in $A$. If $\kappa=1$, assume moreover that
$\mathcal{\ell}\left(  G\right)  <\pi$. Then $G$ is stable.
\end{cor}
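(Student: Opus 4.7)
The plan is to apply Lemma~\ref{stable_M}: it suffices to exhibit a subset $M\subset A$ isometric to $M_\kappa(\ell(G),\varepsilon)$ for some $\varepsilon>0$, having $G$ as its soul.

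First I would verify that $G$ avoids the conical points of $A$. Being a $\kappa$-polyhedron, $A$ has finitely many singular points, each with total angle at most $2\pi$ by Lemma~\ref{gluing}. A locally length-minimizing curve cannot pass through a point of total angle strictly less than $2\pi$ (the two sub-angles at the vertex would have to each equal $\pi$), so $G$ is contained in the open subset on which $A$ is a smooth Riemannian surface of constant curvature $\kappa$.

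Next, let $\rho>0$ be smaller than the distance from $G$ to the singular set. The $\rho$-neighborhood $N_\rho(G)$ is then a tubular neighborhood of $G$ of constant curvature $\kappa$. Because $G$ is non-contractible in $\mathbb{RP}^{2}$, it is one-sided: its normal bundle is non-orientable, since $\mathbb{RP}^{2}$ is non-orientable and $G$ represents the nontrivial homotopy class. Hence $N_\rho(G)$ is homeomorphic to a M\"obius strip.

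Developing $N_\rho(G)$ into $\mathbb{M}_\kappa$ via Fermi coordinates along $G$ produces a geodesic strip symmetric about a geodesic segment of length $\ell(G)$, with its two short sides identified by the central symmetry of Lemma~\ref{LQK}; this is precisely a copy of $M_\kappa(\ell(G),\varepsilon)$ for a suitable $\varepsilon=\varepsilon(\rho)$, whose soul is $G$. In the spherical case $\kappa=1$, Fermi coordinates along $G$ and the model $M_1(\ell(G),\varepsilon)$ are defined exactly when $\ell(G)<\pi$, which is the standing hypothesis. Lemma~\ref{stable_M} then delivers the stability of $G$. The only delicate point in this scheme is the claim that a geodesic in a $\kappa$-polyhedron avoids strictly conical vertices; once this standard fact is in hand, the rest is routine constant-curvature geometry.
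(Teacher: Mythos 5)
Your argument is correct and is exactly the derivation the paper intends: the corollary is stated as an immediate consequence of Lemma~\ref{stable_M}, and you supply the standard verification that a locally minimizing curve avoids the (strictly) conical vertices, that non-contractibility in $\mathbb{RP}^{2}$ forces one-sidedness, and that developing a small tubular neighbourhood yields a copy of $M_{\kappa}\left(\ell\left(G\right),\varepsilon\right)$ (the hypothesis $\ell\left(G\right)<\pi$ for $\kappa=1$ being precisely what Lemma~\ref{LQK} requires). Only a slight rephrasing is needed in the vertex-avoidance step: each of the two sub-angles at a vertex on a geodesic must be \emph{at least} $\pi$ (not equal to $\pi$), which is incompatible with a total angle strictly less than $2\pi$.
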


A\emph{ polyhedral disk} $D$ is a $2$-dimensional disk obtained by gluing a
finite collection of geodesic triangles of $\mathbb{M}_{\kappa}$, in such a
way that the sum of the angles glued together at each point is at most $2\pi$.
By definition, an \emph{angle} of $\partial D$ is a point whose space of
directions has a length distinct from $\pi$. This length will be called the
measure of the angle.

\begin{lm}
\label{LAD} Any polyhedral disk $D$ different from a half-sphere and whose
boundary has no angles can be approximated (with respect to the
Gromov-Haus\-dorff distance) by polyhedral disks whose boundary has two angles
of measure less than $\pi$, separating it in two equally long curves.
\end{lm}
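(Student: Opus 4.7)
The plan is to modify $D$ locally near two equidistant boundary points by attaching two congruent thin isoceles geodesic triangles of $\mathbb{M}_{\kappa}$, producing two corners of equal convex angle while leaving the rest of $\partial D$ unchanged. First I parameterize $\partial D$ by arc length as $\gamma:\mathbb{R}/L\mathbb{Z}\to\partial D$ (with $L=\ell(\partial D)$) and pick antipodal points $p=\gamma(0)$, $q=\gamma(L/2)$, so that $p,q$ split $\partial D$ into two arcs of length $L/2$. For a small parameter $\varepsilon>0$, let $a_p=\gamma(-\varepsilon)$, $b_p=\gamma(\varepsilon)$, and define $a_q,b_q$ symmetrically around $q$. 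Since $\partial D$ has no angles, each short arc $\gamma([-\varepsilon,\varepsilon])$ and $\gamma([L/2-\varepsilon,L/2+\varepsilon])$ is, as a metric subspace, isometric to a geodesic segment of length $2\varepsilon$ in $\mathbb{M}_{\kappa}$. Fix an angle $\beta\in(0,\pi)$ close to $\pi$, and let $T\subset\mathbb{M}_{\kappa}$ be an isoceles geodesic triangle with base of length $2\varepsilon$ and apex angle $\beta$ (for $\kappa=1$ one additionally requires $2\varepsilon<\pi$ so that $T$ exists). Form $D'$ by gluing two congruent copies $T_p,T_q$ of $T$ onto $D$, identifying the base of $T_p$ isometrically with $\gamma([-\varepsilon,\varepsilon])$ and the base of $T_q$ with $\gamma([L/2-\varepsilon,L/2+\varepsilon])$. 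By Alexandrov's gluing theorem (Lemma~\ref{gluing}), $D'$ belongs to $\mathcal{P}(\kappa)$: along the interior of each glued edge the total angle is $2\pi$, and at each of the four gluing endpoints $a_p,b_p,a_q,b_q$ the total angle is the sum of $\pi$ (from $D$) and one base angle of $T$ (less than $\pi/2$), hence at most $2\pi$.

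The boundary $\partial D'$ consists of the two long arcs of $\partial D$ joined through the four lateral edges of $T_p$ and $T_q$. Denoting by $p',q'$ the apices of $T_p,T_q$, the angles at $p'$ and $q'$ both equal $\beta<\pi$; the angles at $a_p,b_p,a_q,b_q$ are reflex, of measure greater than $\pi$; and no new angle appears elsewhere. By the symmetry of the construction (congruent isoceles triangles at antipodal points) the two arcs of $\partial D'$ joining $p'$ and $q'$ have common length $(L/2-2\varepsilon)+2s$, where $s$ denotes the lateral edge length of $T$. Hence $p'$ and $q'$ provide two boundary angles of measure less than $\pi$ that separate $\partial D'$ into two equally long curves, as required.

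Finally, the retraction $\phi:D'\to D$ which is the identity on $D\setminus(T_p\cup T_q)$ and collapses each $T_p,T_q$ onto its base has distortion at most $\mathrm{diam}(T)\to 0$ as $\varepsilon\to 0$; therefore $d_{GH}(D,D')\to 0$ and the approximation claim follows. The only delicate points in the argument are the admissibility of the gluing, checked via the angle-sum estimate above, and making sure the two short arcs around $p$ and $q$ are disjoint from each other and do not overlap other singular features of $D$; this is where I expect the hypothesis that $D$ is not a half-sphere to be used, guaranteeing the existence of a pair of antipodal boundary points around which there is enough room to carry out the modification for all sufficiently small $\varepsilon$.
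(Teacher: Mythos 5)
Your construction does not prove the lemma in the form the paper needs it. When you glue the isoceles triangle $T_p$ onto the arc $\gamma([-\varepsilon,\varepsilon])$, you create -- as you yourself note -- reflex angles at the base endpoints $a_p,b_p$ (of measure $\pi$ plus a base angle of $T$), and likewise at $a_q,b_q$. With the paper's definition of an angle of $\partial D$ (a boundary point whose space of directions has length distinct from $\pi$), your approximating disk therefore has \emph{six} boundary angles, not two. The statement ``boundary has two angles of measure less than $\pi$'' is meant as ``exactly two angles, both convex,'' and this is not a pedantic reading: in the proof of Theorem~\ref{generic_scg} the disk $D'$ is glued along its entire boundary to a M\"obius strip $\Lambda_\kappa(\lambda,\varepsilon)$ whose boundary is angle-free except at two points of measure $\pi+\alpha$. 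At any extra reflex angle of $\partial D'$ the glued angle sum would exceed $\pi+\pi=2\pi$, violating the hypothesis of Alexandrov's gluing theorem (Lemma~\ref{gluing}), so a disk of the kind you produce is unusable downstream. Producing a convex corner \emph{without} introducing compensating reflex corners is exactly the difficulty of the lemma.

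This is also where your guess about the role of the hypothesis ``$D$ is not a half-sphere'' goes wrong -- indeed, your construction never uses it, which should have been a warning sign. The paper first shows that a non-half-sphere $D$ with angle-free boundary must contain an interior vertex $v$ of positive singular curvature (Gauss--Bonnet for $\kappa\le 0$; a doubling argument for $\kappa=1$). It then transports a small amount of that curvature to a point $q$ on the inward normal at $p$ (by excising a digon between $q$ and a point near $v$), cuts $D$ along the normal segment $pq$, and inserts a geodesic quadrilateral $abcb'$ with right angles at $b,b'$ and apex angle $\measuredangle bab'\le\omega(q)$ at $a=q$. The right angles at $b,b'$ make the two copies of $p$ into angle-free boundary points (each gets $\pi/2+\pi/2=\pi$), the new point $c$ is the unique new boundary angle and is convex, and the positive curvature parked at $q$ absorbs the cone angle $\measuredangle bab'$ so that $q$ stays a legitimate interior point with total angle at most $2\pi$. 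Without an interior vertex to draw on, this absorption is impossible, which is why the half-sphere must be excluded. To repair your argument you would have to redesign the local modification along these lines rather than simply appending triangles to the boundary.
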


\begin{proof}
We claim that $D$ has at least one vertex. If $\kappa\leq0$, this follows from
the Gauss-Bonnet Formula. If $\kappa=1$ and $D$ had no vertices, then gluing
two copies of it along its boundary would provide a simply connected
$1$-polyhedron without vertices. Such a polyhedron must be the standard
sphere, in contradiction with the fact that $D$ is not a half-sphere. Hence
$D$ contains at least one vertex $v$, say of singular curvature $\omega\left(
v\right)  $.

Choose two points $p,p^{\prime}\in\partial D$ separating $\partial D$ into two
arcs of equal length. Let $\sigma$ be a segment emanating from $p$ and normal
to $\partial D$, and let $q$ be a point of $\sigma$ close to $p$. Let $\gamma$
be a segment between $q$ and $v$; $\gamma\cap\partial D=\emptyset$, because
$q,v\not \in \partial D$ and $D$ is convex. Let $w$ be a point close to $v$
such that $\measuredangle qvw=\frac{2\pi-\omega\left(  v\right)  }{2}$; it
exists, because $D$ is polyhedral. Such $w$ is joined to $q$ by precisely two
segments, say $\gamma_{1}$, $\gamma_{2}$. Cut out from $D$ the digon they are
bounding and glue $\gamma_{1}$ onto $\gamma_{2}$. On the obtained disk
$D^{\prime}$, $q$ is a vertex of small singular curvature $\omega\left(
q\right)  $.

Consider a quadrilateral $abcb^{\prime}$ in $\mathbb{M}_{\kappa}$ such that
$d\left(  a,b\right)  =d\left(  a,b^{\prime}\right)  =d\left(  p,q\right)  $,
$\measuredangle abc=\measuredangle ab^{\prime}c=\pi/2$ and $\measuredangle
bab^{\prime}\leq\omega\left(  q\right)  $. Note that, if $d\left(  p,q\right)
<\pi/2$, we have $\measuredangle bcb^{\prime}<\pi$.

Cut $D^{\prime}$ along the arc $\sigma^{\prime}$ of $\sigma$ from $p$ to $q$
and glue $abcb^{\prime}$, $a$ at $q$ and the sides $ab$, $ab^{\prime}$ along
the two images of $\sigma^{\prime}$. The resulting disk boundary has one angle
at $c$.

Do the same construction starting at the point $p^{\prime}$, to obtain the
desired approximation of $D$.
\end{proof}

An \emph{almost-geodesic} $G$ on a $\kappa$-polyhedron is a polygonal line
admitting at each of its points $x$ (except its endpoints, if any) two tangent
directions, dividing the space of directions at point $x$ in two curves, at
least one of which has length $\pi$.

The proof of the next simple result is left to the reader.

\begin{lm}
\label{LLSCG} Let $P\in\mathcal{A}\left(  \kappa,2\right)  $ be a $\kappa
$-polyhedron whose vertices have singular curvature less than $\pi$. Let
$\{\Gamma_{n}\}$ be a sequence of geodesics on $P$ converging to
$\Gamma\subset P$ with respect to the Pompeiu-Hausdorff distance. Then
$\Gamma$ is an almost-geodesic.
\end{lm}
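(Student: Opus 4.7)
The plan is to upgrade the Pompeiu--Hausdorff convergence to a uniform convergence of arc-length parametrizations, then analyze $\Gamma$ separately at points that are, or are not, vertices of $P$. Assuming the lengths $\ell(\Gamma_n)$ are bounded (otherwise $\Gamma$ could fail to be one-dimensional and the conclusion would be vacuous), Lemma \ref{LA} produces, after extracting a subsequence, a rectifiable parametrization $\Gamma:[0,1]\to P$ whose image is the Hausdorff limit. The crucial input from the hypothesis is that $0<\omega(v)<\pi$ at every vertex $v$ of $P$: since $\omega(v)>0$, the cone angle at $v$ is strictly less than $2\pi$, and a standard first-variation argument rules out any geodesic passing through $v$; hence every $\Gamma_n$ avoids the vertices.

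At a non-vertex point $x=\Gamma(t_0)$, a small neighbourhood of $x$ in $P$ is isometric to a disk of $\mathbb{M}_\kappa$. For large $n$, the restriction of $\Gamma_n$ to a fixed parameter window around $t_0$ lies in this disk and is a straight segment of $\mathbb{M}_\kappa$; a uniform limit of such segments is itself a segment, so $\Gamma$ is straight through $x$, with opposite tangent directions splitting the space of directions at $x$ into two arcs of length $\pi$.

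The key step is a vertex $v=\Gamma(t_0)$ with $t_0\in(0,1)$. A neighbourhood of $v$ is isometric to a cone $C_v$ of total angle $\theta:=2\pi-\omega(v)\in(\pi,2\pi)$. After a further subsequence, one can pick a single radial arc in $C_v$ disjoint from all the traces of $\Gamma_n$ near $v$, and develop along it onto a planar sector of angle $\theta$ in $\mathbb{M}_\kappa$. In the development, each $\Gamma_n$ is a straight chord at some distance $d_n>0$ from the apex. The uniform convergence $\Gamma_n(t_0)\to v$ forces $d_n\to 0$, so the chords (after subsequence and rotation) converge to a straight line through the apex. Pulling this line back to the cone yields two rays from $v$ which split the space of directions at $v$ into one arc of length exactly $\pi$ (on the developed side) and one arc of length $\pi-\omega(v)>0$; these are the two tangent directions of $\Gamma$ at $v$, and the almost-geodesic condition is satisfied. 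Since $P$ has only finitely many vertices and $\Gamma$ has bounded length, $\Gamma$ is a concatenation of finitely many geodesic segments of $\mathbb{M}_\kappa$, i.e.\ a polygonal line.

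The main obstacle is the vertex analysis: one must arrange a single coherent development along the subsequence and then verify that the two limit tangent directions are genuinely distinct. Both issues are handled via the strict inequality $\omega(v)<\pi$, which keeps $\theta-\pi>0$ and leaves enough room in the space of directions at $v$ to find a radial direction missed by all the $\Gamma_n$.
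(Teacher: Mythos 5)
The paper explicitly leaves this proof to the reader, so there is no reference argument to compare against; but your proof is correct and is almost certainly what the authors had in mind. After upgrading Hausdorff to uniform convergence via Lemma~\ref{LA}, the decisive step --- developing a neighbourhood of a conical point $v$ onto a sector of $\mathbb{M}_\kappa$ of angle $\theta = 2\pi-\omega(v)\in(\pi,2\pi)$, noting that the developed chords tend to a geodesic through the apex, and reading off the two arcs of lengths $\pi$ and $\theta-\pi$ in the space of directions --- is exactly the almost-geodesic condition; and the hypothesis $\omega(v)<\pi$ is what guarantees both that a radial cut missed by the (subsequence of) $\Gamma_n$ exists and that $\theta-\pi>0$. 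Your tacit bounded-length assumption is consistent with the only use of the lemma in the paper (Lemma~\ref{LDLC}, where $\ell\le a<2\pi$), so it is a fair reading of the statement.
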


We denote by $S_{\alpha}\in\mathcal{P}\left(  1\right)  $ the orientable
surface obtained by gluing the two sides of a digon in $\mathbb{M}_{1}$ of
angle $2 \pi- \alpha$.

\begin{lm}
\label{LDPi} If $P\in\mathcal{A}\left(  1,2\right)  $ is a $1$-polyhedron then
$\mathrm{diam}(P)\leq\pi$, with equality if and only if $P=S_{\alpha}$ for
some $\alpha\in\left[  0,2\pi\right[  $.
\end{lm}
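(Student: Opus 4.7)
The plan is in two parts: establish the inequality via a Bonnet--Myers-type bound, then treat the equality case through a spherical suspension argument identifying $P$ with $S_\alpha$. For the inequality $\mathrm{diam}(P)\le\pi$, any two points at distance greater than $\pi$ would be incompatible with the existence of a comparison triangle in $\mathbb{M}_{1}$, since spherical triangles have all sides of length $\le\pi$; this is the familiar extension of Bonnet--Myers to Alexandrov spaces with curvature bounded below by $1$.

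For the rigidity, I fix $p,q\in P$ with $d(p,q)=\pi$. The key intermediate step is to show that $d(p,x)+d(x,q)=\pi$ for every $x\in P$: the spherical comparison triangle with sides $\pi,d(p,x),d(x,q)$ has perimeter $\le 2\pi$, so $d(p,x)+d(x,q)\le\pi$, while the triangle inequality supplies the reverse estimate. Consequently every $x$ lies on a segment of length $\pi$ from $p$ to $q$, so the map $(v,t)\mapsto\gamma_{v}(t)$ sending a direction $v\in\Sigma_{p}P$ and $t\in[0,\pi]$ to the point at distance $t$ from $p$ along the corresponding $p$-to-$q$ segment realises $P$ as the spherical suspension of $\Sigma_{p}P$. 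Because $P$ is a topological sphere, $\Sigma_{p}P$ is a circle whose length equals the total angle $\ell\in(0,2\pi]$ at $p$ (bounded by $2\pi$ because singular curvatures are non-negative on $\mathcal{A}(1)$); and the spherical suspension of this circle unfolds to a digon in $\mathbb{M}_{1}$ of opening angle $\ell$ with opposite sides identified, which is precisely $S_{\alpha}$ for $\alpha=2\pi-\ell\in[0,2\pi)$. The converse is immediate: in $S_{\alpha}$, the two cone points lie at distance $\pi$.

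The main obstacle is to turn the distance identity $d(p,x)+d(x,q)=\pi$ into a rigid isometric suspension structure: I must justify that distinct directions at $p$ yield segments whose interiors are disjoint, and that the suspension map preserves distances globally, not merely along radial geodesics. I expect to lean on the polyhedral structure --- segments are concatenations of great-circle arcs across spherical faces, and the positivity of singular curvature at interior vertices, together with the length constraint $\mathcal{\ell}(\gamma_{v})=\pi$, forces each $\gamma_{v}$ to miss every vertex other than (possibly) $p$ and $q$ --- or, more economically, to invoke the maximal diameter theorem for $2$-dimensional Alexandrov spaces of curvature $\ge 1$.
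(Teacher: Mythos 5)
Your inequality step coincides with the paper's (both just cite the Bonnet--Myers-type bound for curvature $\geq 1$), but your rigidity argument takes a genuinely different route. The paper picks $u,v$ with $d(u,v)=\pi$ and, for an arbitrary third point $x$, forms the degenerate comparison triangle $\tilde u\tilde x\tilde v$ with $\measuredangle\tilde u\tilde x\tilde v=\pi$; the angle comparison then forces the concatenation of the segments $xu$ and $xv$ to be a geodesic, so the total angle at $x$ is $2\pi$ and $x$ is not a vertex. Hence $P$ has at most two vertices, and the conclusion follows from Troyanov's classification of $1$-polyhedra with at most two conical singularities --- an entirely elementary argument that exploits the polyhedral hypothesis. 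You instead derive the identity $d(p,x)+d(x,q)=\pi$ (correctly, from the perimeter bound $2\pi$ plus the triangle inequality) and aim for the spherical-suspension structure over $\Sigma_p P$; identifying that suspension with $S_{2\pi-\ell}$ is also correct. What your route buys is generality: it would apply to any surface in $\mathcal{A}(1,2)$, not only polyhedra. What it costs is that the step you yourself flag as the main obstacle --- upgrading the distance identity to a global isometry with the suspension --- is precisely the content of the maximal diameter (suspension rigidity) theorem for Alexandrov spaces; if you invoke that theorem the proof closes, but it is a substantially heavier tool than anything the paper uses, and your alternative sketch via the polyhedral structure is not yet a proof (disjointness of the radial segments and preservation of non-radial distances are exactly what remains to be shown). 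Either cite the suspension theorem explicitly or switch to the paper's two-vertex reduction.
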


\begin{proof}
The inequality $\mathrm{diam}(A)\leq\pi$ is well-known for any $A\in
\mathcal{A}\left(  1\right)  $ (see \cite[Theorem 3.6]{BGP}), and all surfaces
$S_{\alpha}$ have diameter $\pi$.

Let $u$, $v\in P$ such that $\mathrm{diam}\left(  P\right)  =\pi=d\left(
u,v\right)  $. Consider a triangle $uvx$ in $P$ and let $\tilde{u}\tilde
{v}\tilde{x}$ be a comparison triangle on the sphere $\mathbb{M}_{1}$. We have
$\measuredangle uxv\geq\measuredangle\tilde{u}\tilde{x}\tilde{v}=\pi$. It
follows that the union of the segments $ux$ and $xv$ is a geodesic on $P$,
hence $x$ is not a vertex. The conclusion follows from the fact that the only
$1$-polyhedra with at most $2$ vertices are the surfaces $S_{\alpha}$
\cite{Troyanov}.
\end{proof}

\bigskip

The following lemma is a variant of a result of V. A. Toponogov, see for
example \cite{t} or \cite[p. 297]{k}.

\begin{lm}
\label{LHS} Let $G$ be a simple closed almost-geodesic of length $2\pi$ on the
$1$-polyhedron $P\in\mathcal{A}\left(  1,2\right)  $. If the boundary of one
of the two half-surfaces bounded by $G$ has no angles then this half-surface
is isometric to a half-sphere.
\end{lm}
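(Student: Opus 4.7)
The plan is to double the good half of $P$ along $G$, producing a closed $1$-polyhedron in $\mathcal{A}(1,2)$ in which $G$ is a genuine simple closed geodesic of length $2\pi$, and then to apply Lemma~\ref{LDPi} together with a classification of simple closed geodesics on $S_\alpha$.

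First, let $D$ denote the half-surface with $\partial D=G$ having no angles, and let $D'$ be a second isometric copy of $D$. I would glue $D$ to $D'$ along $G$ by the identity. Because $\partial D$ has no angles, the space of directions at each $x\in G$ in $D$ is an arc of length $\pi$, and similarly in $D'$; after identification the total angle at $x$ is $2\pi$. By Alexandrov's gluing theorem (Lemma~\ref{gluing}), $P':=D\cup_G D'$ lies in $\mathcal{A}(1)$; being the topological double of a polyhedral disk, it is a $1$-polyhedron of Euler characteristic $2$, so $P'\in\mathcal{A}(1,2)$. By construction, $G$ becomes a closed geodesic of length $2\pi$ in $P'$.

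Second, I would establish $\mathrm{diam}(P')=\pi$. Choose $u,v\in G$ with $d_G(u,v)=\pi$; Lemma~\ref{LDPi} already gives $d_{P'}(u,v)\le\pi$. For the reverse inequality, observe that the two arcs $g_1,g_2$ of $G$ from $u$ to $v$ each have length $\pi$ and, at $u$, their tangent directions are antipodal. The Toponogov hinge comparison for $(\pi,\pi,\pi)$ degenerates in $\mathbb{M}_1$ to a hemisphere-bounding configuration, and the rigidity case identifies the region enclosed on one side of the hinge in $P'$ with a hemisphere; in particular $d_{P'}(u,v)=\pi$. Lemma~\ref{LDPi} then yields $P'=S_\alpha$ for some $\alpha\in[0,2\pi)$.

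Finally, I would verify that $\alpha=0$. Unfolding $S_\alpha$ as a spherical digon of angle $2\pi-\alpha$ with its two meridional sides identified, one checks that the only simple closed geodesic is the image of the equator of the digon, of length $2\pi-\alpha$: any tilted great circle, when projected to $S_\alpha$ with $\alpha>0$, wraps the digon more than once and develops self-intersections. Since $G$ is simple and has length $2\pi$, this forces $\alpha=0$, hence $P'=\mathbb{M}_1$ and $D$ is isometric to a half-sphere.

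The hard part is the diameter-equality step. The delicate point is that the hinge sides $g_1,g_2$ are not \emph{a priori} minimizing, so Toponogov's rigidity does not apply automatically; one needs either a careful justification in the degenerate comparison case or, alternatively, a Gauss-Bonnet argument on the two subregions of $D$ cut out by a hypothetical shortcut $\sigma$ from $u$ to $v$ of length $<\pi$, combining the resulting angle identities at $u$ and $v$ with the absence of angles of $\partial D$ to derive a contradiction.
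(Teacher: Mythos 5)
Your architecture is essentially the paper's, transposed: the paper folds the angle-free half-surface $C$ onto itself along $G$ (identifying boundary points symmetric about a point $p$), observes the resulting closed $1$-polyhedron has diameter $\pi$, invokes Lemma~\ref{LDPi} to identify it with $S_{\pi}$, and unfolds to get the half-sphere; you double instead of folding, which costs you an extra step at the end (you must rule out $\alpha>0$ by classifying simple closed geodesics of length $2\pi$ on $S_{\alpha}$, a claim you only assert with ``one checks that\ldots'', whereas the folding pins the cone angles, hence $\alpha$, down to $\pi$ automatically). That extra step is correct but unproven as written; it is a genuine piece of work (develop a geodesic avoiding the cone points onto a great circle and show it closes up only after length $2\pi n$ with $2\pi n/(2\pi-\alpha)\in\mathbb{Z}$, forcing self-intersections when $\alpha>0$).

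The real gap, however, is the one you yourself flag: the diameter equality. Both your route and the paper's reduce to the statement that for antipodal points $u,v$ on $G$ (antipodal along $G$), the two arcs of $G$ between them are \emph{minimizing}, i.e.\ $d(u,v)=\pi$. Your appeal to the hinge version of Toponogov's theorem cannot work as stated: for a hinge with two sides of length $\pi$ and angle $\pi$ the comparison configuration closes up, and hinge comparison only yields the \emph{upper} bound $d(u,v)\le 0\cdot(\text{comparison})$-type conclusions in the wrong direction; it gives no lower bound on $d(u,v)$, and moreover one side of the hinge must already be known to be minimizing for the theorem to apply. Your fallback Gauss--Bonnet sketch is also not obviously viable, since $K\ge 1$ gives no a priori upper bound on the total curvature of the two regions cut off by a hypothetical shortcut. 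The paper resolves exactly this point by invoking the (non-trivial) comparison argument in the proof of Toponogov's theorem on closed geodesics of length $2\pi$ (Klingenberg, Theorem~3.4.10), adapted to the polyhedral setting via the almost-geodesic hypothesis. Without that input your proof does not close; with it, the rest of your argument (Alexandrov gluing of the double, $d_{P'}(u,v)=d_D(u,v)$ by reflecting paths of the double into $D$, Lemma~\ref{LDPi}, and the $S_\alpha$ classification) goes through.
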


\begin{proof}
Let $C$ be the half-surface of $P$ whose boundary has no angles. If $x$ is a
point of $G$, we denote by $x^{\prime}$ the point on $G$ such that
$G\setminus\left\{  x,x^{\prime}\right\}  $ consists two equally long arcs. By
the use of a (non trivial) comparison argument, it follows that $G$ is the
union of two segments between $x$ and $x^{\prime}\in G$, see the proof of
Theorem 3.4.10 in \cite[p. 297]{k}.

Now choose $p\in G$ and glue $C$ on itself by identifying points $x\in G$ and
$y\in G$ such that $d\left(  x,p\right)  =d\left(  y,p\right)  $. Since $G$ is
the union of two segments, the diameter of the obtained surface is $\pi$,
hence this surface is $S_{\pi}$ (by Lemma \ref{LDPi}) and $C$ is the standard half-sphere.
\end{proof}

The following lemma follows directly from Lemma \ref{LHS}.

\begin{lm}
\label{length} The length of a simple closed geodesic $G$ on a $1$-polyhedron
$A\in\mathcal{A}(1,1)$ satisfies $\mathcal{\ell}\left(  G\right)  \leq\pi$,
with equality if and only if $A$ is the projective space with constant
curvature $1$.
\end{lm}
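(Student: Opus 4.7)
The plan is to pass to the orientation double cover of $A$ and reduce to Lemma~\ref{LHS}. First I would argue that $G$ is necessarily non-contractible. Since $G$ is a geodesic on $A \in \mathcal{A}(1)$, at any vertex $v$ on $G$ the two half-angles along $G$ are each at least $\pi$, so the total angle is at least $2\pi$; combined with $\omega(v) \geq 0$ this forces $\omega(v) = 0$, and hence $G$ avoids the genuine singular set. If $G$ did bound a disk $D$, the complementary M\"{o}bius strip $M = A \setminus \mathrm{int}(D)$ would have $\chi(M) = 0$ with geodesic boundary free of corners, and Gauss--Bonnet would give $\int_M K + \sum_{v \in M}\omega(v) = 0$. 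Combined with $K \geq 1$ and $\omega \geq 0$, this forces $M$ to have zero area, a contradiction.

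Next I would look at the orientation double cover $\tilde A \to A$. Because $A$ is a $1$-polyhedron homeomorphic to $\mathbb{RP}^2$, $\tilde A$ is a $1$-polyhedron in $\mathcal{A}(1,2)$ -- a polyhedral sphere. Since $G$ is one-sided, its preimage $\tilde G$ is a connected simple closed geodesic of length $2\ell(G)$ with no corners (vertices of $\tilde A$ on $\tilde G$ would descend to vertices of $A$ on $G$, ruled out above). The deck involution $\sigma$ is a free, orientation-reversing isometry that swaps the two half-surfaces of $\tilde A \setminus \tilde G$ and acts on $\tilde G$ as the half-period shift.

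The key step -- and the main obstacle -- is to establish $\ell(\tilde G) \leq 2\pi$, equivalently $\ell(G) \leq \pi$. My plan here is to invoke a Toponogov-type comparison: in any space of curvature $\geq 1$ the perimeter of a geodesic triangle is bounded by $2\pi$. Choosing three equally spaced points $q_1, q_2, q_3$ on $\tilde G$, the three arcs between them form a geodesic triangle of perimeter $\ell(\tilde G)$, and the bound $\ell(\tilde G) \leq 2\pi$ follows immediately when these arcs happen to be the minimizing geodesics between consecutive vertices. The delicate point is to handle the case where some arc of $\tilde G$ is not minimizing: one must replace it by the true minimizing geodesic and use Gauss--Bonnet on the resulting sub-regions, combined with the involution $\sigma$ (so that disks on either side of $\tilde G$ are isometric), to propagate the bound back to $\ell(\tilde G)$.

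Finally, the equality case $\ell(G) = \pi$ gives $\ell(\tilde G) = 2\pi$. Since $\tilde G$ has no corners, Lemma~\ref{LHS} applied to each of the two half-surfaces of $\tilde A \setminus \tilde G$ shows that both are isometric to half-spheres. Consequently $\tilde A$ is the standard unit sphere and $A = \tilde A / \sigma$ is the round projective plane of curvature $1$. The converse is immediate, as every projective line on the round $\mathbb{RP}^2$ has length exactly $\pi$.
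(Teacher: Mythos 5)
Your overall structure matches the paper's intended argument: the paper's one-line proof (``follows directly from Lemma \ref{LHS}'') tacitly relies on exactly the reduction you describe -- pass to the orientation double cover $\tilde A \in \mathcal{A}(1,2)$, lift $G$ to a corner-free simple closed geodesic $\tilde G$ of length $2\ell(G)$, bound $\ell(\tilde G)$ by $2\pi$, and invoke Lemma \ref{LHS} for the equality case. Your preliminary observations (vertex-avoidance, non-contractibility via Gauss--Bonnet on the complementary M\"obius strip, connectedness of $\tilde G$, the structure of the deck involution $\sigma$) are correct and usefully spell out what the paper leaves implicit, and your treatment of the equality case -- Lemma \ref{LHS} on each hemisphere, then identifying $\sigma$ with the antipodal map -- is right.

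The gap is in the step you yourself flag as the main obstacle: the bound $\ell(\tilde G) \leq 2\pi$. Your plan via geodesic triangles does not go through as stated, because the three arcs of $\tilde G$ between three equally-spaced points are geodesics but not in general segments, whereas the comparison bound ``perimeter of a geodesic triangle is at most $2\pi$'' applies only to triangles whose sides are minimizing. Replacing a non-minimizing arc by a segment produces a different closed curve and gives no control on $\ell(\tilde G)$ itself, and the Gauss--Bonnet/involution ``propagation'' you sketch is not developed to the point where one can see it closes the circle. The bound you need is precisely Toponogov's theorem, which the paper already cites (\cite{t}, \cite[Thm.~3.4.10, p.~297]{k}) just before Lemma \ref{LHS}: on a surface of curvature $\geq 1$, a simple closed geodesic has length at most $2\pi$. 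Its proof shows that the two arcs of $\tilde G$ between a pair of points dividing $\tilde G$ into equal halves are in fact segments; combined with $\mathrm{diam}(\tilde A)\leq\pi$ (Lemma \ref{LDPi}), this gives $\ell(\tilde G)\leq 2\pi$ directly. Citing or adapting that comparison argument, rather than the geodesic-triangle perimeter bound, is the missing ingredient.
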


\begin{thm}
\label{generic_scg} Most surfaces in $\mathcal{A}(\kappa,1)$ have infinitely
many simple closed geodesics of bounded length.
\end{thm}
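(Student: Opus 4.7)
The plan is to follow the scheme of Theorem~\ref{TNC}. Let $\mathcal{G}_p\subset\mathcal{A}(\kappa,1)$ be the set of surfaces admitting at least $p$ simple closed geodesics, and $\mathcal{S}_p\subset\mathcal{G}_p$ the subset of those admitting $p$ stable such geodesics. The inclusion $\mathcal{S}_p\subset\operatorname{int}(\mathcal{G}_p)$ is proved exactly as in Theorem~\ref{TNC}: using Lemma~\ref{SEL} to place a convergent sequence $A_n\to A\in\mathcal{S}_p$ in a common metric space, and then producing, from each of the $p$ stable geodesics of $A$, a nearby simple closed geodesic of $A_n$ for $n$ large, these being pairwise distinct provided the accuracy of stability is chosen smaller than one third of the pairwise Pompeiu-Hausdorff distances of the original geodesics.

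The main task is the density of $\mathcal{S}_p$ in $\mathcal{A}(\kappa,1)$. Given $A\in\mathcal{A}(\kappa,1)$, I first approximate it by a polyhedron $A_0\in\mathcal{P}(\kappa)\cap\mathcal{A}(\kappa,1)$ (Lemma~\ref{Approximation}) and choose, in a flat region of $A_0$, a small geodesic disk $D_0$ of boundary length $\ell$. Denote by $C_\kappa(\lambda,\varepsilon)$ the flat cylinder obtained from the quadrilateral $Q_\kappa(\lambda,\varepsilon)$ of Lemma~\ref{LQK} by gluing its two $\varepsilon$-long sides without twist; verbatim the argument of Lemma~\ref{stable_M}, with Lemma~\ref{LQK} replacing Lemma~\ref{L1}, shows that the soul of $C_\kappa(\lambda,\varepsilon)$ is a stable simple closed geodesic of length $\lambda$. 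Now construct a ``tube-with-cap'' $T$ by chaining $p$ copies of $C_\kappa(\lambda,\varepsilon)$ with $\lambda=\ell$ (for $\kappa=1$ this forces $\lambda<\pi$, which is automatic for $\ell$ small) and capping the free end with a small $\mathbb{M}_\kappa$-disk of boundary $\ell$. Remove $D_0$ from $A_0$ and glue $T$ along $\partial D_0$; by Lemma~\ref{gluing} the resulting surface $A'$ lies in $\mathcal{A}(\kappa)$, and since $T$ is topologically a disk while $A_0\setminus\operatorname{int}D_0$ is a M\"obius strip, $A'\cong\mathbb{RP}^2$, i.e.\ $A'\in\mathcal{A}(\kappa,1)$. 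The $p$ souls of the cylinders in $T$ are pairwise disjoint simple closed geodesics of $A'$, each of length $\ell$; they are stable because stability is a local property of the cylinder, insensitive to the fact that the souls are contractible in $A'$. Choosing $\varepsilon$ and $\ell$ small makes $T$ small in diameter, so $d_{GH}(A_0,A')$ is arbitrarily small and density follows.

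Baire category then yields that $\bigcap_p\operatorname{int}(\mathcal{G}_p)$ is residual in $\mathcal{A}(\kappa,1)$, so a typical surface carries infinitely many simple closed geodesics. For any fixed $L>0$, imposing $\ell\leq L$ in the construction shows that the set of surfaces with at least $p$ simple closed geodesics of length $\leq L$ is also residual, giving the bounded-length conclusion. The main obstacle I anticipate is conceptually minor but technically real: defining $C_\kappa(\lambda,\varepsilon)$ and transposing the stability proof of Lemma~\ref{stable} to $\mathbb{M}_\kappa$ through Lemma~\ref{LQK}, and verifying that the isometric matching of boundary circles along the chain and with $\partial D_0$ reduces to the single scalar condition $\lambda=\ell$.
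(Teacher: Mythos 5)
Your inclusion $\mathcal{S}_p\subset\operatorname{int}(\mathcal{G}_p)$ and the Baire-category conclusion are fine, and for $\kappa=-1$ the cylinder approach is the one the paper uses in Section~\ref{neg}. But the construction you propose for proving density of $\mathcal{S}_p$ has a genuine gap for $\kappa\geq 0$, and the gap is precisely the step you flag as ``conceptually minor'': the claim that the soul of the \emph{untwisted} cylinder $C_\kappa(\lambda,\varepsilon)$ is stable ``verbatim'' by the argument of Lemma~\ref{stable_M} via Lemma~\ref{LQK}. Lemma~\ref{LQK} is a statement about paths from $x\in L$ to $s(x)\in R$, where $s$ is the \emph{central symmetry} -- i.e.\ it encodes the half-twist of a M\"obius gluing. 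It does \emph{not} say anything about paths from $x$ to the corresponding point $\tau(x)$ under the translation gluing, which is what your cylinder needs. And in fact the needed analogue fails for $\kappa\geq 0$: in $C_1(\lambda,\varepsilon)$ the two boundary circles are strictly \emph{shorter} than the soul (spherically, a band around a geodesic has shorter parallels at its edges), so a closed curve homotopic to the soul and touching $\partial C$ can be shorter than the soul, and no constant $\alpha>0$ as in Lemma~\ref{L1}(ii) exists; for $\kappa=0$ the same quantity degenerates to zero. Equivalently, the index form $\int_0^\lambda(|\dot V|^2-\kappa|V|^2)\,dt$ evaluated on the \emph{constant} normal field $V$ (admissible because the normal bundle of a cylinder's soul is trivial) is $-\kappa\lambda\le 0$, so the soul is not a strict local minimizer of length in its free homotopy class for any $\lambda$ when $\kappa=1$. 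Your remark ``for $\kappa=1$ this forces $\lambda<\pi$, which is automatic for $\ell$ small'' does not help: $\lambda<\pi$ is the threshold for the \emph{M\"obius} soul, where the half-twist forces $V(t+\lambda)=-V(t)$ and the smallest admissible eigenvalue is $(\pi/\lambda)^2$; no such threshold exists for the cylinder.

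Consequently, the souls of your tube-with-cap are closed geodesics but are not stable for $\kappa\in\{0,1\}$, and the density of $\mathcal{S}_p$ is not established in the very cases the theorem must cover. This is exactly why the paper's proof cuts $P\in\mathcal{A}(\kappa,1)$ along its shortest non-contractible geodesic $G$ (which for $\kappa=1$ is shown to have length $<\pi$, Lemma~\ref{length}), approximates the resulting disk by Lemma~\ref{LAD}, and glues back a M\"obius strip $\Lambda_\kappa(\lambda,\varepsilon)$ built from the $(2m+2)$-gon $\Pi_\kappa$: the inserted geodesics are non-contractible in $\mathbb{RP}^2$ and have M\"obius-strip neighbourhoods, so Corollary~\ref{cor} (which really is Lemma~\ref{LQK} plus Lemma~\ref{stable_M}) applies. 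The non-trivial topology of $\mathbb{RP}^2$ is not incidental here; it is what makes stability possible in nonnegative curvature. A secondary issue, subordinate to the one above, is that gluing your tube to $A_0\setminus\operatorname{int}D_0$ along a geodesic circle is not directly covered by Lemma~\ref{gluing} as stated (the boundary of a metric disk in a flat region is not a geodesic polygon, and the angle bookkeeping at the seam is not checked); this can probably be repaired, but the stability failure cannot.
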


\begin{proof}
Denote by $\mathcal{S}_{m}$ the set of those surfaces in $\mathcal{A}%
(\kappa,1)$ which admit at least $m$ stable simple closed geodesics. We only
need to prove that $\mathcal{S}_{m}$ is dense; afterwards the proof proceeds
in the same way as the proof of Theorem \ref{TNC}.

Let $P_{0}\in A\left(  \kappa,1\right)  $ be the real projective plane of
constant curvature. Choose $A\in\mathcal{A}(\kappa,1)\setminus\left\{
P_{0}\right\}  $ and approximate $A$ by a polyhedron $P\neq P_{0}$. The
shortest non-contractible closed curve on $P$ is a geodesic $G$. Note that, by
Lemma \ref{length}, if $\kappa=1$ then $\mathcal{\ell}\left(  G\right)  <\pi$.
Cutting $P$ along $G$ provides a polyhedral disk $D$. By Lemma \ref{LAD}, $D$
can be approximated by polyhedral disks $D^{\prime}$ whose boundary has two
angles of measure $\pi-\alpha_{0}$, for small positive $\alpha_{0}$,
separating it in two curves of equal length $L\approx\mathcal{\ell}\left(
G\right)  /2$.

\begin{figure}[ptb]
\begin{center}
\includegraphics[
scale=.4,
]{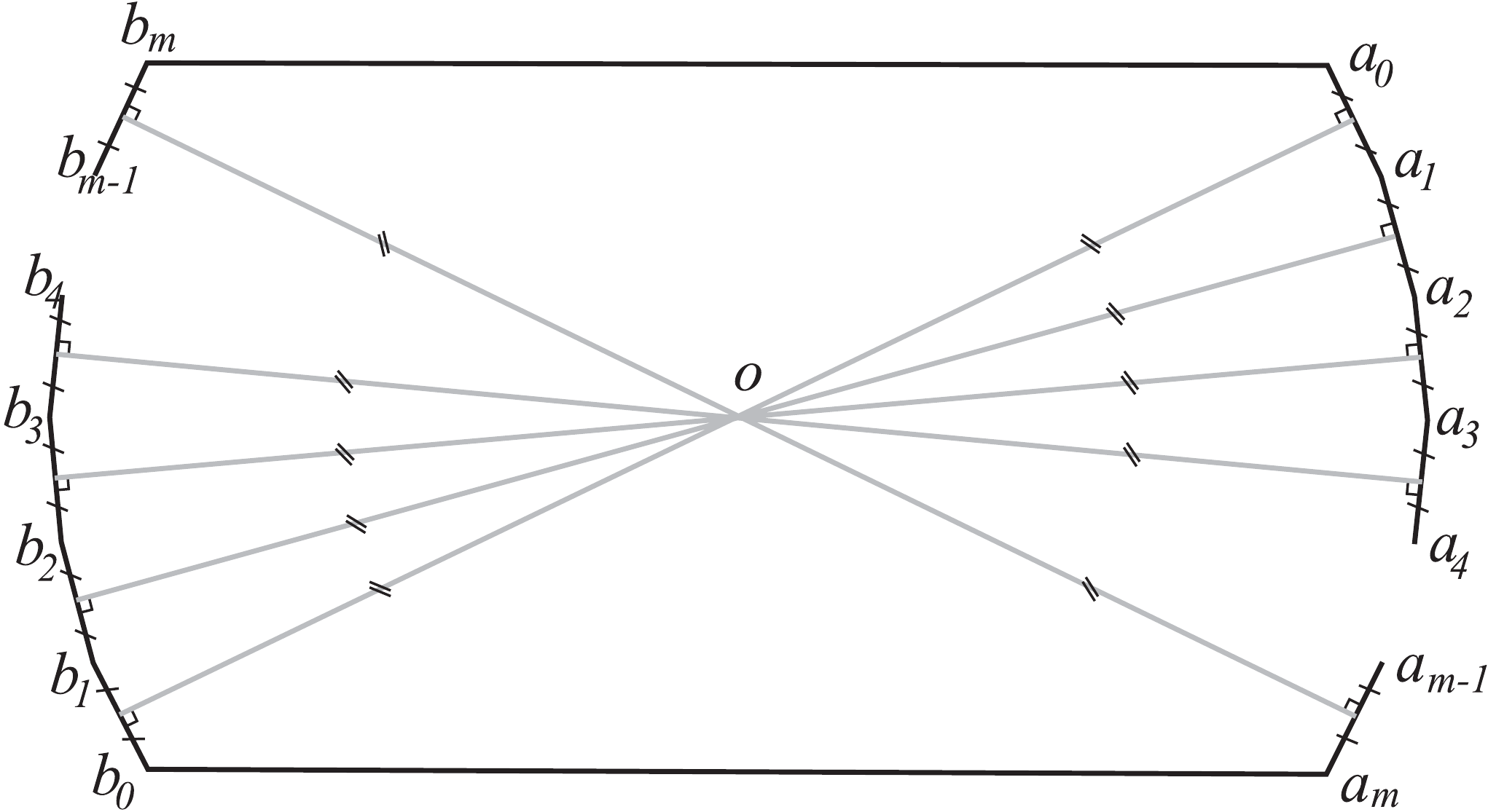}
\end{center}
\caption{Definition of $\Pi_{\kappa}\left(  m,\varepsilon\right)  $ in the
proof of Theorem \ref{generic_scg} (in the case $\kappa=0$).}%
\label{F2}%
\end{figure}

Consider in $\mathbb{M}_{\kappa}$ the $\left(  2m+2\right)  $-gon $\Pi
_{\kappa}\left(  m,\lambda,\varepsilon\right)  =a_{0}a_{1}\ldots a_{m}%
a_{0}b_{1}\ldots b_{m}$ defined as in Figure \ref{F2}, where $\varepsilon
=d\left(  a_{i},a_{i-1}\right)  =d\left(  b_{i},b_{i-1}\right)  $ ($i=1$,
\ldots, $m$) and $\lambda$ is the distance between mid-points of opposite
edges (\ie, the length of a gray line in Figure \ref{F2}). Glue the side
$a_{i}a_{i-1}$ onto the side $b_{i}b_{i-1}$ ($i=1$, \ldots, $m$), to obtain a
surface $\Lambda_{\kappa}(\lambda,\varepsilon)$ homeomorphic to a M\"{o}bius
strip. Its boundary has two angles of measure $\pi+\alpha$ (with $\alpha>0$
and tending to $0$ when $\varepsilon$ tends to $0$) separating it into two
equally long arcs. One can adjusts the parameters $\lambda$ and $\varepsilon$
such that the boundary length of $\Lambda_{\kappa}(\lambda,\varepsilon)$ is
exactly $2L$, and such that $\alpha\leq\alpha_{0}$. So we can glue this
$\Lambda_{\kappa}(\lambda,\varepsilon)$ to the boundary of $D^{\prime}$. The
resulting surface (which still belongs to $\mathcal{A}(1,1)\cap\mathcal{P}%
\left(  1\right)  $) approaches $P$ when $\varepsilon\rightarrow0$. It is
clear that this surface admits at least $m$ non-contractible simple closed
geodesics, corresponding to the gray lines in Figure \ref{F2}. These geodesics
are stable by Corollary \ref{cor}, proving the density of $\mathcal{S}_{m}$ in
$\mathcal{A}(\kappa,1)$.

It is clear from the above argument that the lengths of geodesics are bounded.
The proof is complete.
\end{proof}


\section{Remaining case}

\label{last}

P. Gruber proved that most convex surfaces have no simple closed geodesics
\cite{grub1}, and his proof can be easily adapted for most surfaces in
$\mathcal{A}(0,2)$. An important step in his proof was to find a dense set of
convex polyhedra without simple closed geodesics; this followed immediately
from the Gauss-Bonnet formula, because the curvature of a convex polyhedron is
concentrated at its vertices. This proof idea cannot be translated to
polyhedra in $\mathcal{A}(1,2)$, because, in our case, the curvature measure
is no longer supported by vertices.

\begin{lm}
\label{prepb}For any $a<2\pi$, any $1$-polyhedron $P\in\mathcal{A}\left(
1,2\right)  $ has at most finitely many closed almost-geodesics of length less
than $a$.
\end{lm}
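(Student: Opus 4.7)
The plan is to argue by contradiction, combining compactness of curves of bounded length (Lemma \ref{LA}) with local rigidity of geodesic segments of length less than $2\pi$ on the piecewise-spherical surface $P$. Assume there is an infinite sequence $\{\Gamma_n\}_{n\in\N}$ of pairwise distinct closed almost-geodesics on $P$ with $\ell(\Gamma_n)<a$. Parametrizing each $\Gamma_n$ proportionally to arc-length on $[0,1]$, Lemma \ref{LA} extracts a subsequence converging uniformly to a closed rectifiable curve $\Gamma$ with $\ell(\Gamma)\le a<2\pi$, and with $\Gamma_n([0,1])\to\Gamma([0,1])$ in the Pompeiu-Hausdorff metric.

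Next, one checks that $\Gamma$ itself is a closed almost-geodesic. Away from the finite vertex set $V(P)$, each $\Gamma_n$ is a genuine geodesic arc, so the argument of Lemma \ref{LLSCG} carries over to show that the smooth part of $\Gamma$ is geodesic. At any vertex $v\in V(P)$ where $\Gamma$ has a corner, $\Gamma_n$ must itself have a corner at $v$ for $n$ large: vertices of $P$ are isolated, and a smooth geodesic arc of $\Gamma_n$ crossing a small neighbourhood of $v$ cannot reproduce the jump in tangent direction undergone by $\Gamma$ there. The $\pi$-angle condition on the pair of tangent directions then passes to the limit in the compact space of directions at $v$.

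Label the corners of $\Gamma$ in cyclic order as $u_1,\ldots,u_k$ (with the degenerate case $k=0$, in which $\Gamma$ is a smooth closed geodesic, handled by fixing a base point in place of a corner), with connecting geodesic segments $\sigma_i\colon u_i\to u_{i+1}$ of length $\ell_i$. By the same reasoning, for $n$ large $\Gamma_n$ has the same cyclic sequence of corners and decomposes as $\sigma_1^n\cup\cdots\cup\sigma_k^n$, with $\sigma_i^n\to\sigma_i$ uniformly. Crucially, the length bound forces $d(u_i,u_{i+1})<\pi$: the complement of $\sigma_i$ in $\Gamma$ is a path from $u_{i+1}$ back to $u_i$, giving $\ell(\Gamma)\ge\ell_i+d(u_{i+1},u_i)\ge 2\,d(u_i,u_{i+1})$ and hence $d(u_i,u_{i+1})\le a/2<\pi$.

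The remaining and principal step is to show that the set of geodesic segments from $u_i$ to $u_{i+1}$ of length less than $2\pi$ is finite. Any such segment develops onto $\mathbb{M}_1$ as a great-circle arc issuing from $u_i$ and ending at a lift of $u_{i+1}$; its combinatorial type is the finite sequence of spherical faces of $P$ it traverses. Since $P$ has finitely many faces of positive inradius, the number of combinatorial types of total length at most $a$ is finite; within each type, the geodesic is uniquely determined by the endpoint condition, using that $u_{i+1}$ is not antipodal to $u_i$ in $\mathbb{M}_1$ (ensured by the strict inequality $d(u_i,u_{i+1})<\pi$, since the conjugate locus of a point on a sphere of curvature $1$ is its antipode). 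Therefore $\sigma_i^n=\sigma_i$ for each $i$ once $n$ is large, forcing $\Gamma_n=\Gamma$ and contradicting pairwise distinctness. The main obstacle is precisely this finiteness assertion, where the hypothesis $a<2\pi$ is essential, as without it one would encounter the continuous one-parameter family of great circles of length $2\pi$ present on the standard sphere.
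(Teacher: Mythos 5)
Your overall strategy (extract a limit curve, then use rigidity of the limit to force $\Gamma_n=\Gamma$ for large $n$) is reasonable in outline, but two of its load-bearing steps do not hold as stated. The first is the claim that for large $n$ the curve $\Gamma_n$ must have a corner at every vertex where $\Gamma$ does, which is what your reduction to ``segments with the same fixed endpoints'' rests on. On a $1$-polyhedron, a sequence of genuine closed geodesics passing to one side of a vertex $v$ of singular curvature $\omega(v)>0$ at distances tending to $0$ converges to a broken line through $v$ making angle exactly $\pi$ on that side and $\pi-\omega(v)$ on the other: the limit acquires a corner at $v$ that no $\Gamma_n$ possesses. This is precisely why Lemma \ref{LLSCG} only yields an almost-geodesic in the limit, and your heuristic that a smooth arc ``cannot reproduce the jump in tangent direction'' is exactly what fails. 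Likewise, in your degenerate case $k=0$ the curves $\Gamma_n$ have no reason to pass through the chosen base point. So the decomposition $\Gamma_n=\sigma_1^n\cup\dots\cup\sigma_k^n$ with $\sigma_i^n$ joining $u_i$ to $u_{i+1}$ need not exist, and this is in fact the hard case of the lemma. The paper disposes of it with a digon argument: two distinct, mutually close simple closed geodesics not separated by vertices must intersect (Gauss--Bonnet applied to the cylinder they would otherwise bound, which cannot have zero total curvature on a $1$-polyhedron), and the digons they then bound are genuine spherical lunes, so each arc between consecutive intersection points has length $\pi$ and $\ell(G_n)\ge 2\pi>a$; the almost-geodesics through a fixed vertex are handled separately by a monotonicity argument on the angle they make there, which can only take finitely many values.

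The rigidity step itself is also not justified. Non-antipodality of the developed endpoints does not follow from $d_P(u_i,u_{i+1})<\pi$: developing $\sigma_i$ places the image of $u_{i+1}$ at arc-length $\ell(\sigma_i)$ from that of $u_i$ along a great circle, and $\ell(\sigma_i)$ --- which is the length of a not necessarily minimizing segment --- can perfectly well equal $\pi$. In that case the developed endpoints are antipodal (conjugate points on a curvature-$1$ surface occur at parameter $\pi$), and uniqueness within a combinatorial type fails already on the round sphere, where a one-parameter family of arcs of length $\pi$ joins two antipodes; isolatedness of $\sigma_i$ among segments with the same endpoints, which is what you actually need to conclude $\sigma_i^n=\sigma_i$, fails with it. A smaller but real soft spot is the finiteness of combinatorial types of length at most $a$: a geodesic can clip a face near one of its vertices along an arbitrarily short chord, so the number of faces traversed is not controlled by the faces' inradii. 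Each of these points needs a genuine argument before your approach closes.
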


\begin{proof}
A simple closed almost-geodesic which does not pass through any vertex is a
simple closed geodesic. Two such geodesics are necessarily intersecting, for
otherwise the topological cylinder they would bound would have to be flat by
the Gauss-Bonnet formula.

Assume there are infinitely many simple closed geodesics of length less than
$a$; by compactness (see Lemma \ref{LA}), one can find a sequence $G_{n}$
(with $\mathcal{\ell}\left(  G_{n}\right)  \leq a$) of distinct simple closed
geodesics converging to an almost-geodesic $G$. For $n$, $m$ large enough,
$G_{n}$ and $G_{m}$ are not separated by vertices. Hence each portion of
$G_{n}$ between two points of $G_{n}\cap G_{m}$ measures $\pi$. It follows
that $\mathcal{\ell}\left(  G_{n}\right)  \geq2\pi>a$.

Now choose a vertex $v$ and examine the simple closed almost-geodesics of
length at most $a$ passing through $v$. As precedently, if there are
infinitely many, one can find a sequence $G_{n}$ of such curves converging to
$G$. Obviously $v$ also belongs to $G$. For $n$, $m$ large enough $G_{n}$ and
$G_{m}$ are not separated by vertices, thus, if $G_{n}\cap G_{m}$ contains a
second point, then the previous argument applies and $\mathcal{\ell}\left(
G_{n}\right)  >a$.

Therefore, we can assume moreover that all curves $G_{n}$ lie in the same
half-surface bounded by $G$. Hence one can extract from $G_{n}$ a subsequence
such that $G_{m}$ lies between $G_{n}$ and $G$ for any $m>n$. Let $\alpha_{n}$
be the angle at point $v$ of the half surface bounded by $G_{n}$ and
containing $G$; the sequence $\alpha_{n}$ is decreasing, in contradiction with
the fact that all $G_{n}$ are supposed to be almost-geodesics.
\end{proof}

\begin{rmk}
We obtained a few properties of polyhedra in $\mathcal{A}\left(  1,2\right)
$, see Lemmas \ref{LLSCG}, \ref{LDPi}, \ref{LHS} and \ref{prepb}. Notice that
our polyhedra are different from the \emph{ball-polyhedra}, defined and
studied in a series of papers by K. Bezdek and his collaborators, see e.g.
\cite{Bezdek}.
\end{rmk}

\begin{lm}
\label{LDLC}For any $a<2\pi$, any surface $A\in\mathcal{A}\left(  1,2\right)
$ can be approximated by surfaces without simple closed geodesics of length at
most $a$.
\end{lm}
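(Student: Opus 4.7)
The plan is to work in the polyhedral setting and use Lemma \ref{prepb} as the linchpin. By Lemma \ref{Approximation}, it suffices to approximate an arbitrary $1$-polyhedron $P\in\mathcal{A}(1,2)$ by $1$-polyhedra without simple closed geodesics of length at most $a$. Fix such a $P$ and pick some $a'\in(a,2\pi)$; by Lemma \ref{prepb}, $P$ carries only finitely many simple closed almost-geodesics of length $\le a'$, hence in particular only finitely many simple closed geodesics $G_{1},\ldots,G_{k}$ of length $\le a$.

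The strategy is to destroy each $G_{i}$ by inserting a small conical singularity on it. Since $P\in\mathcal{A}(1,2)$ is a $1$-polyhedron, each of its vertices carries positive singular curvature, so no geodesic of $P$ passes through a vertex; in particular each $G_{i}$ meets infinitely many non-vertex points and we may choose distinct non-vertex points $x_{i}\in G_{i}$. At each $x_{i}$, I would cut a wedge of small angle $\omega_{i}>0$ from a small disk of radius $\rho_{i}$ around $x_{i}$ and isometrically glue the two sides of the wedge. By Lemma \ref{gluing} the resulting surface $P'$ still belongs to $\mathcal{A}(1,2)\cap\mathcal{P}(1)$, and $d_{GH}(P,P')$ can be made arbitrarily small by choosing all $\omega_{i}$ and $\rho_{i}$ small. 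Each $x_{i}$ is now a vertex of $P'$ with positive singular curvature $\omega_{i}$, so no geodesic of $P'$ passes through it; in particular none of the $G_{i}$ remains a closed geodesic of $P'$.

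The heart of the argument is to show that for $\omega_{i},\rho_{i}$ sufficiently small, $P'$ has no simple closed geodesic of length $\le a$ at all. I argue by contradiction: otherwise, one produces a sequence of such perturbations $P'_{n}\to P$ each carrying a simple closed geodesic $G'_{n}\subset P'_{n}$ of length $\le a$. Placing everything in a common metric space via Lemma \ref{SEL} and extracting a convergent subsequence via Lemma \ref{LA}, I obtain a Pompeiu-Hausdorff limit $G\subset P$ of length $\le a$. Since outside the shrinking supports of the perturbations $G'_{n}$ is locally a geodesic of $P$, the argument of Lemma \ref{LLSCG} adapts to show that $G$ is a closed almost-geodesic on $P$, and by Lemma \ref{prepb} it must coincide with some $G_{j}$. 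The convergence $G'_{n}\to G_{j}$ then forces $G'_{n}$ to enter every neighborhood of $x_{j}\in G_{j}$ while avoiding $x_{j}$ itself.

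The main obstacle is converting this avoidance into a quantitative contradiction. The cleanest route is a Gauss-Bonnet argument: consider the disk $D_{n}\subset P'_{n}$ bounded by the simple closed geodesic $G'_{n}$ that contains the new vertex $x_{j}$. It has geodesic boundary and Euler characteristic $1$, and its total curvature must equal $2\pi$. But the curvature measure of $D_{n}$ includes the atom $\omega_{j}$ at $x_{j}$ in addition to the area term and the atoms at the original vertices of $P$ lying in $D_{n}$; comparing with the corresponding disk cut out by $G_{j}$ on $P$, whose Gauss--Bonnet balance sheet is the same except for the missing atom $\omega_{j}$, one exhibits a fixed discrepancy of $\omega_{j}>0$ that survives in the limit and contradicts the convergence $G'_{n}\to G_{j}$.
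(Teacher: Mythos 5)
Your overall architecture coincides with the paper's: approximate by a $1$-polyhedron, invoke Lemma \ref{prepb} to reduce to finitely many short closed (almost-)geodesics, plant a conical singularity on each one, and use a limiting argument (Lemmas \ref{SEL}, \ref{LA}, \ref{LLSCG}) to force any surviving short geodesic of the perturbed surfaces to converge to one of the original ones. Two things go wrong after that. First, the surgery as you describe it is not well defined: removing a sector of angle $\omega_{i}$ from the disc of radius $\rho_{i}$ about $x_{i}$ and identifying the two radii produces a cone whose boundary circle is strictly shorter than the hole left in $P$, so the pieces cannot be reglued isometrically; equivalently, a closed surface of Euler characteristic $2$ must keep total curvature $4\pi$, so one cannot simply add a positive atom at $x_{i}$ --- an equal amount of curvature has to be withdrawn nearby. (The paper achieves this by first rescaling $P$ to curvature $(1+\varepsilon)^{2}$ and then swapping a small triangle at $x_{G}$ for a unit-curvature triangle with the same side lengths, which creates the atom at $x_{G}$ together with exactly compensating modifications inside that triangle.)

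Second, and more seriously, this forced compensation destroys your Gauss--Bonnet endgame. Any admissible surgery is curvature-neutral on the scale of the surgery region, so if the disc $D_{n}$ bounded by $G'_{n}$ contains the whole perturbed patch, then $\omega_{P'_{n}}(\mathrm{int}\,D_{n})=2\pi$ holds with no discrepancy at all; and if $G'_{n}$ cuts through the patch, the balance depends on exactly how it cuts it, which is the hard local question rather than a soft one. Moreover the comparison object on $P$ is ill-posed: $x_{j}$ lies \emph{on} $G_{j}$, hence on the boundary of both discs that $G_{j}$ bounds, so there is no ``corresponding disc'' containing $x_{j}$ in its interior; and along your sequence the atoms $\omega_{j}^{(n)}$ tend to $0$, so no fixed discrepancy can ``survive in the limit''. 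The paper does use Gauss--Bonnet, but only to dispose of the easy subcase where the new geodesic is disjoint from the old one; the substantive case, where it crosses the surgery triangle, is settled by an explicit spherical-trigonometry expansion showing that such a configuration forces $d(v_{G},x_{G})=\pi/2$ and hence $\ell(G)=2\pi>a$. That quantitative step is the heart of the lemma and is missing from your proposal.
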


\begin{proof}
First approximate $A$ by a $1$-polyhedron $P\in\mathcal{A}\left(  1,2\right)
$. By Lemma \ref{prepb}, $P$ carries finitely many simple closed
almost-geodesics of length at most $a$. On this polyhedron, choose on each
simple closed geodesic $G$ of length at most $a$ a point $x_{G}$ which does
not belong to any other simple closed almost-geodesic of length at most $a$.

Consider the surface $P_{\varepsilon}$ obtained from $P$ in the following way.
First divide all distances on $P$ by $1+\varepsilon$, to obtain a $\left(
1+\varepsilon\right)  ^{2}$-polyhedron. Then, for each chosen point $x_{G}$,
cut out a small isosceles triangle $x_{G}y_{G}y_{G}^{\prime}$, symmetric with
respect to the geodesic normal to $G$ at $x_{G}$, such that $d\left(
x_{G},y_{G}\right)  =d\left(  x_{G},y_{G}^{\prime}\right)  =\varepsilon$ and
$\measuredangle y_{G}x_{G}y_{G}^{\prime}=\frac{\pi}{2}$. Then, replace this
triangle by a triangle $T_{G}$ of ${\mathbb{M}}_{1}$ with the same edge lengths.

In the rest of the proof we show that, for $\varepsilon$ small enough,
$P_{\varepsilon}$ has no simple closed geodesic of length at most $a$. Suppose
on the contrary that there exists a simple closed geodesic $G_{\varepsilon
}\subset P_{\varepsilon}$ such that $\mathcal{\ell}\left(  G_{\varepsilon
}\right)  \leq a$. Since the points $x_{G}$ are (corresponding to) vertices of
$P_{\varepsilon}$, $G_{\varepsilon}$ is not (corresponding to) a simple closed
geodesic of $P$. Hence $G_{\varepsilon}$ should pass across at least one
triangle $T_{G}$.

Denote by $G_{\varepsilon}^-$ the part of $G_{\varepsilon}$ outside the interior of all triangles $T_{G}$.
By compactness, $G_{\varepsilon}^-$ admits (at least) a limit curve
$G_{0}\subset P$, when $\varepsilon$ tends to $0$.
Since $G_{\varepsilon}^-$ can be seen as a curve on $P$, Lemma \ref{LLSCG} implies that
$G_{0}$ is an almost-geodesic through $x_{G}$, hence $G_{0}=G$. It follows
that, for small $\varepsilon$, $G_{\varepsilon}$ is included in a
neighbourhood $V_{G,\varepsilon}$ of $G$ in $P_{\varepsilon}$. Moreover, for
distinct simple closed geodesics $F$ and $G$, $V_{G,\varepsilon}\cap
x_{F}y_{F}y^{\prime}_{F}=\emptyset$.

Let $x_{G}^{\prime}$ be the point on $G$ which, together with $x_{G}$, divides
$G$ into two equally-long arcs. Denote by $N$ (resp. $N^{\prime}$) a geodesic
arc normal to $G$ through $x_{G}$ (resp. through $x_{G}^{\prime}$). Notice
that $V_{G,\varepsilon}$ may be chosen to be symmetrical with respect to $N$
(or, equivalently, with respect to $N^{\prime}$); denote by $s$ this symmetry;
we have $G=s(G)$, $N=s\left(  N\right)  $, $N^{\prime}=s(N^{\prime})$.

Assume first that $G_{\varepsilon}\neq s\left(  G_{\varepsilon}\right)  $.
Since $G_{\varepsilon}\cap\left(  N\cup N^{\prime}\right)  \subset
G_{\varepsilon}\cap s\left(  G_{\varepsilon}\right)  $, $G_{\varepsilon}$ and
$s\left(  G_{\varepsilon}\right)  $ intersect in at least two points, and so
define at least two digons, symmetric to each other and of perimeter
$2\Lambda_{\varepsilon}$. Now replace back $T_{\varepsilon}$ by a triangle of
curvature $(1+\varepsilon)^{2}$ and extend the remaining parts of
$G_{\varepsilon}$ and $s\left(  G_{\varepsilon}\right)  $ to complete the
digons. This produces two spherical digons of perimeter $2\pi/(1+\varepsilon
)$, and thus contradicts the fact that $\lim\Lambda_{\varepsilon}\leq a$.

Therefore, we may assume that $G_{\varepsilon}=s\left(  G_{\varepsilon
}\right)  $. We claim that $G_{\varepsilon}\cap G\neq\emptyset$. Suppose on
the contrary that $G_{\varepsilon}$ and $G$ are not intersecting. Then the
boundary of the topological cylinder $C$ between them has only one angle (at
$x_{G}$), of measure $\pi-\eta$, with $\eta>0$. By the Gauss-Bonnet formula,
the total curvature of $C$ should equal $-\eta$, which is obviously
impossible, hence $G_{\varepsilon}\cap G\neq\emptyset$.

Notice that $G_{\varepsilon}\cap G\neq\emptyset$ contains precisely two
points, because otherwise $G$ and $G_{\varepsilon}$ would determine at least
three digons, two of which would have perimeter $2\pi/(1+\varepsilon)$, and so
the length $G$ would be at least $2\pi/(1+\varepsilon)$, and its limit when
$\varepsilon$ goes to $0$ would be greater than $a$.

The next argument is illustrated by Figure \ref{Fig3}. Put $G_{\varepsilon
}\cap G=\{v_{G},v_{G}^{\prime}\}$ (with $v_{G}^{\prime}=s\left(  v_{G}\right)
$). $G$ and $G_{\varepsilon}$ are delimitating two digons, one of which is
spherical (because it doesn't intersect $T_{G}$) and has perimeter
$2\pi/\left(  1+\varepsilon\right)  $.

The geodesic $G_{\varepsilon}$ intersects the segments $x_{G}y_{G}$ and
$x_{G}y_{G}^{\prime}$ at $z_{G}$ and $z_{G}^{\prime}$ respectively. Let $\phi$
be the angle at $z_{G}$ of the geodesic triangle $x_{G}y_{G}z_{G}$.
\begin{figure}[ptb]
\begin{center}
\includegraphics[scale=1]{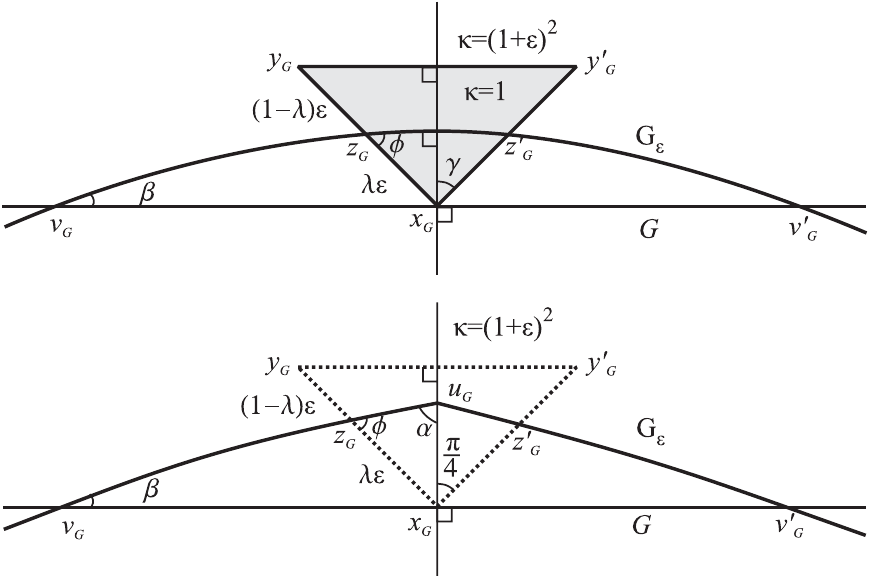}
\end{center}
\caption{Proof of Lemma \ref{LDLC}.}%
\label{Fig3}%
\end{figure}

Now cut out $T_{G}$ and glue back a triangle of curvature $\left(
1+\varepsilon\right)  ^{2}$; extend $G_{\varepsilon}$ beyond $z_{G}$ and
$z_{G}^{\prime}$ until it self-intersects, say at $u_{G}$. Denote by $2\alpha$
the angle of the quadrilateral $x_{G}z_{G}u_{G}z_{G}^{\prime}$ at $u_{G}$. Put
$\rho=1+\varepsilon$, $\lambda=d\left(  x_{G},z_{G}\right)  /\varepsilon$.

The rest of the proof consists in computing (a Taylor expansion of) $d\left(
v_{G},x_{G}\right)  $ as a function of $\varepsilon$, by means of spherical trigonometry.

Denote by $2\gamma$ the angle of $T_{G}$ at point $x_{G}$. Using twice the law
of sines, one can compute
\begin{align*}
\gamma & =\arcsin\frac{\sin\left(  \frac{1}{\rho}\arcsin\left(  \sin\frac{\pi
}{4}\sin\rho\varepsilon\right)  \right)  }{\sin\varepsilon}\\
& =\frac{\pi}{4}-\frac{1}{6}\varepsilon^{3}+O\left(  \varepsilon^{4}\right)
\text{.}%
\end{align*}

The law of cosines for angles in one half of the triangle $x_{G}z_{G}%
z_{G}^{\prime}\subset T_{G}$ gives
\[
\cos\frac{\pi}{2}=-\cos\phi\cos\gamma+\sin\phi\sin\gamma\cos\lambda
\varepsilon\text{,}%
\]
whence
\begin{align*}
\tan\phi &  =\frac{1}{\tan\gamma\cos\lambda\varepsilon}\\
&  =1+\frac{\lambda^{2}\varepsilon^{2}}{2}+\frac{\varepsilon^{3}}{3}+O\left(
\varepsilon^{4}\right)  \text{.}%
\end{align*}
By straightforward computations
\begin{align*}
\sin\phi &  =\frac{\sqrt{2}}{2}\left(  1+\frac{\lambda^{2}\varepsilon^{2}}%
{4}+\frac{\varepsilon^{3}}{6}\right)  +O\left(  \varepsilon^{4}\right)
\text{,}\\
\cos\phi &  =\frac{\sqrt{2}}{2}\left(  1-\frac{\lambda^{2}\varepsilon^{2}}%
{4}-\frac{\varepsilon^{3}}{6}\right)  +O\left(  \varepsilon^{4}\right)
\text{.}%
\end{align*}
The law of cosines for angles in the triangle $u_{G}x_{G}z_{G}$ gives
\begin{align*}
\cos\alpha &  =-\cos\phi\cos\frac{\pi}{4}+\sin\phi\sin\frac{\pi}{4}\cos
\rho\lambda\varepsilon\\
&  =\left(  \frac{1}{6}-\frac{1}{2}\lambda^{2}\right)  \varepsilon
^{3}+O\left(  \varepsilon^{4}\right)  \text{.}%
\end{align*}
The law of cosines for angles in the triangle $v_{G}x_{G}z_{G}$ gives
\begin{align*}
\cos\beta &  =-\cos\left(  \pi-\phi\right)  \cos\frac{\pi}{4}+\sin\left(
\pi-\phi\right)  \sin\frac{\pi}{4}\cos\rho\lambda\varepsilon\\
&  =1-\lambda^{2}\left(  \frac{\varepsilon^{2}}{4}+\frac{\varepsilon^{3}}%
{2}\right)  +O\left(  \varepsilon^{4}\right)  \text{,}%
\end{align*}
whence
\[
\sin\beta=\frac{\sqrt{2}}{2}\lambda\varepsilon\left(  1+\varepsilon\right)
+O\left(  \varepsilon^{3}\right)  \text{.}%
\]
At last, the law of sines in the same triangle $v_{G}x_{G}z_{G}$ yields
\begin{align*}
\sin\rho d\left(  v_{G},x_{G}\right)   &  =\frac{\sin\rho\lambda\varepsilon
}{\sin\beta}\sin\phi\\
&  =1+O\left(  \varepsilon\right)  \text{.}%
\end{align*}
On the other hand, $d\left(  v_{G}x_{G}\right)  $ does not depend on
$\varepsilon$, and so is equal to $\pi/2$. Hence the length of $G$ is $2\pi$
and we get a contradiction. This ends the proof.
\end{proof}

\begin{thm}
\label{A(1,2)} Most $A\in\mathcal{A}\left(  1,2\right)  $ have no simple
closed geodesic.
\end{thm}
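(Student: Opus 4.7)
The strategy is a Baire-category argument built directly on Lemma~\ref{LDLC}. For $a<2\pi$, define
\[
\mathcal{M}_{a}:=\{A\in\mathcal{A}(1,2)\mid A\text{ has a simple closed geodesic of length}\leq a\}
\]
and $\mathcal{N}_{a}:=\mathcal{A}(1,2)\setminus\mathcal{M}_{a}$. Lemma~\ref{LDLC} already asserts that $\mathcal{N}_{a}$ is dense in $\mathcal{A}(1,2)$ for every such $a$. The plan is to show that each $\mathcal{N}_{a}$ is moreover open; then $\bigcap_{n\geq 1}\mathcal{N}_{2\pi-1/n}$ is a dense $G_{\delta}$, hence residual in $\mathcal{A}(1,2)$.

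To prove openness I would establish that $\mathcal{M}_{a}$ is closed. Let $A_{n}\to A$ in $\mathcal{A}(1,2)$ with $G_{n}\subset A_{n}$ simple closed geodesics of length $\leq a$. Using Lemma~\ref{SEL} embed all surfaces in a common metric space $Z$; by Lemma~\ref{LA}, after extraction, the curves $G_{n}$, parametrized proportionally to arclength, converge uniformly to a closed rectifiable curve $\gamma\subset A$ with $\ell(\gamma)\leq a$. The standard fact that limits of geodesics in Alexandrov spaces are geodesics makes $\gamma$ a closed geodesic of $A$. Applying Perel'man's stability (Lemma~\ref{LP}) I obtain almost-isometric homeomorphisms $h_{n}:A\to A_{n}$, and the Jordan curves $h_{n}^{-1}(G_{n})\subset A$ Hausdorff-converge to the trace of $\gamma$. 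If $\gamma$ had a self-intersection at some point $p$, the geodesic structure would force its four tangent directions at $p$ to be paired in opposite pairs at angle $\pi$; the non-crossing configuration that the approximating Jordan curves $h_{n}^{-1}(G_{n})$ must adopt near $p$, examined in a small local model---along the lines of the spherical-digon Taylor expansion at the end of the proof of Lemma~\ref{LDLC}, and using $\ell(\gamma)\leq a<2\pi$---yields a contradiction. Hence $\gamma$ is simple and $A\in\mathcal{M}_{a}$.

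It remains to handle a surface $A\in\bigcap_{n}\mathcal{N}_{2\pi-1/n}$ that still carries some simple closed geodesic $G$; necessarily $\ell(G)=2\pi$. Taking $p,p'\in G$ at arc-distance $\pi$, each half of $G$ is a minimizing geodesic of length $\pi$, so $d_{A}(p,p')=\pi$. Combined with the general bound $\mathrm{diam}(A)\leq\pi$ for $A\in\mathcal{A}(1)$ (the same bound opening the proof of Lemma~\ref{LDPi}), this gives $\mathrm{diam}(A)=\pi$, and a Toponogov rigidity argument in the spirit of Lemmas~\ref{LDPi} and~\ref{LHS} forces $A$ to be a spherical suspension, i.e., some $S_{\alpha}$. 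A direct inspection of $S_{\alpha}$ shows that a simple closed geodesic of length exactly $2\pi$ exists only for $\alpha=0$, so $A$ is the round sphere. As a single isometry class this is nowhere dense in $\mathcal{A}(1,2)$, and removing it from the residual set of the previous paragraph gives the theorem.

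The main obstacle I anticipate is the simpleness of the limit curve $\gamma$ in the closedness step: Hausdorff limits of Jordan curves need not be simple in general, and the argument must crucially exploit both the Alexandrov angle-$\pi$ rigidity at a putative geodesic crossing and the strict inequality $\ell(\gamma)<2\pi$---precisely the content driving both Lemma~\ref{LDLC} and the rigidity appearing in the boundary case---to rule out self-intersections in the limit.
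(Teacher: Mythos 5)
Your top-level skeleton (dense complements supplied by Lemma \ref{LDLC}, closedness of the ``bad'' sets, the length-$2\pi$ boundary case isolated as the round sphere) coincides with the paper's, but the step you yourself flag as the main obstacle is precisely where the proposal has a genuine gap --- in fact two. First, the ``standard fact that limits of geodesics in Alexandrov spaces are geodesics'' is false for \emph{closed} geodesics, which are only locally shortest: the scale at which $G_{n}$ minimizes can shrink to $0$, and the limit curve may pass through a conical point where it is merely an almost-geodesic (angle $\geq\pi$ on one side only). The paper's own Lemma \ref{LLSCG} records exactly this weaker conclusion, so your set $\mathcal{M}_{a}$ need not be closed for this reason alone. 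Second, nothing in your setup prevents $\ell(G_{n})\to 0$, in which case the limit is a constant curve and no geodesic of $A$ is produced; and your argument for simpleness of the limit only addresses transversal self-crossings, leaving tangential self-contacts and partial collapse of the Jordan curves untreated.

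The paper sidesteps all of this by never working with $\mathcal{M}_{a}$. It introduces the quantified classes $\mathcal{H}_{A}(\varepsilon,\eta,a)$ of simple closed geodesics $G$ such that (i) every parameter-subarc of $G$ of parameter-length $\varepsilon$ is a shortest path, (ii) points of $G$ at arc-distance at least $\varepsilon$ along $G$ are at ambient distance at least $\eta$, and (iii) $\ell(G)\leq a$, and sets $\mathcal{M}_{pqr}=\{A:\mathcal{H}_{A}(1/p,1/q,2\pi-1/r)\neq\emptyset\}$. Conditions (i)--(iii) pass to limits under uniform convergence of the curves combined with Gromov--Hausdorff convergence of the surfaces (Lemma \ref{SEL} plus Ascoli and lower semicontinuity of length): (i) forces the limit to be a genuine closed geodesic, (ii) forces it to be simple and non-degenerate, so each $\mathcal{M}_{pqr}$ is closed; it has empty interior by Lemma \ref{LDLC}; and every simple closed geodesic of length $<2\pi$ lies in some $\mathcal{H}_{A}(1/p,1/q,2\pi-1/r)$, so the countable union together with $\{S_{0}\}$ exhausts $\mathcal{M}$. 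This quantification over the minimality scale and the injectivity modulus is the idea missing from your proposal; without it the closedness of $\mathcal{M}_{a}$ is not merely unproved but doubtful.
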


\begin{proof}
A closed geodesic on $A\in\mathcal{A}\left(  1,2\right)  $ is seen as a map
from $\mathbb{R}/\mathbb{Z}$ to $A$; its parameter is assumed proportional to
the arc-length. For a given surface $A$, define $\mathcal{H}_{A}\left(
\varepsilon,\eta,a\right)  $ as the set of all simple closed geodesics $G$ of
$A$ such that \textit{(i)} for any $t\in\mathbb{R}/\mathbb{Z}$ and any
$s\in\left[  0,\varepsilon\right]  $, $d\left(  \gamma\left(  t\right)
,\gamma\left(  t+s\right)  \right)  =s\mathcal{\ell}\left(  G\right)  $,
\textit{(ii)} for any points $x,y\in G$ whose distance along $G$ is at least
$\varepsilon$, we have $d_{A}(x,y)\geq\eta$, and \textit{(iii)} $\mathcal{\ell
}\left(  G\right)  \leq a$.

Denote by $\mathcal{M}_{pqr}$ the set of all $A\in\mathcal{A}\left(
1,2\right)  $ such that $\mathcal{H}_{A}\left(  \frac{1}{p},\frac{1}{q}%
,2\pi-\frac{1}{r}\right)  $ is nonempty.

We have to prove that the set
\[
\mathcal{M}\overset{\mathrm{def}}{=}\set(:A\in\mathcal{A}\left(  1,2\right)
|A~\text{has a simple closed geodesic}:)
\]
is meager. By Lemma \ref{length}, we have
\[
\mathcal{M}=\left\{  S_{0}\right\}  \cup\bigcup_{p,q,r\in\mathbb{N}^{\ast}%
}\mathcal{M}_{pqr}\text{.}%
\]

Each set $\mathcal{M}_{pqr}$ has empty interior by Lemma \ref{LDLC}; we show
next that it is closed. Let $A_{n}\in\mathcal{M}_{pqr}$ be a sequence
converging to $A\in\mathcal{A}\left(  1,2\right)  $. By Lemma \ref{SEL}, we
can assume that $A_{n}$ and $A$ are embedded in the same compact metric space
$Z$. Let $G_{n}$ be a geodesic in $\mathcal{H}_{A_{n}}\left(  \frac{1}%
{p},\frac{1}{q},2\pi-\frac{1}{r}\right)  $. Notice that $\mathcal{\ell}\left(
G_{n}\right)  <2\pi$, hence by Ascoli's theorem we can extract from $G_{n}$ a
converging subsequence; denote by $G:\mathbb{R}/\mathbb{Z}\rightarrow A$ its
limit. Since $\mathcal{\ell}$ is lower semi-continuous, $G$ belongs to
$\mathcal{H}_{A}\left(  \frac{1}{p},\frac{1}{q},2\pi-\frac{1}{r}\right)  $.
This ends the proof.
\end{proof}


\section{Conclusions}

\label{conclusions}

Gathering together Theorems \ref{TNC}, \ref{generic_scg} and \ref{A(1,2)}, we get

\begin{sumthm}
i) For $\kappa=1$ we have: \newline i.1) most surfaces in $\mathcal{A}(1,1)$
have infinitely many simple closed geodesics; \newline i.2) most surfaces in
$\mathcal{A}(1,2)$ have no simple closed geodesic.

ii) For $\kappa=0$ we have: \newline ii.1) most surfaces in $\mathcal{A}(0,2)$
have no closed geodesic; \newline ii.2) most surfaces in $\mathcal{A}(0,1)$
have infinitely many simple closed geodesics; \newline ii.3) all surfaces in
$\mathcal{A}(0,0)$ are unions of simple closed geodesics.

iii) Most surfaces in $\mathcal{A}(-1)$ have infinitely many non-intersecting
simple closed geodesics.
\end{sumthm}

\begin{rmk}
P. Gruber proved that most convex surfaces have no closed geodesics
\cite{grub}, and his proof yields the above result on most surfaces in
$\mathcal{A}(0,2)$. Whether most surfaces in $\mathcal{A}(1,2)$ do not have
non-simple closed geodesics remains an open question.

It is also an open question whether a typical surface in $\mathcal{A}(-1)$ or
in $\mathcal{A}(\kappa,1)$ also has infinitely many non-simple closed
geodesics of a given \textquotedblleft flat knot type\textquotedblright\ (with
the terminology in \cite{Angenent}).
\end{rmk}

Our final remark concerns the length spectrum of Alexandrov surfaces.

\begin{rmk}
One can also consider lengths in the statements of Theorems \ref{TNC} and
\ref{generic_scg}. Put $\mathcal{B}(-1)= \mathcal{A}(-1)$, $\mathcal{B}(0)=
\mathcal{A}(0,1)$ and $\mathcal{B}(1)= \mathcal{A}(1,1)$. With the very same
proof ideas, but varying the parameters $\lambda$ and $\varepsilon$, one can
prove the following statement.

Let $\kappa\in\{-1,0,1\}$; for any $\delta>0$ there exists a residual set
$\mathcal{C}$ in $\mathcal{B}(\kappa)$ such that, for any $A\in\mathcal{C}$,
there exist $L>0$ and infinitely many simple closed geodesics on $A$ whose
lengths are pairwise different and belong to $[L,L+\delta]$.
\end{rmk}


\bigskip

\noindent\textbf{Acknowledgement.} The authors were partly supported by the
grant PN-II-ID-PCE-2011-3-0533 of the Romanian National Authority for
Scientific Research, CNCS-UEFISCDI.

They also express thanks to Tudor Zamfirecu for suggesting them to investigate
properties of most Alexandrov surfaces.


{\small \bigskip}

{\small J\"oel Rouyer }

{\small \noindent Institute of Mathematics ``Simion Stoilow'' of the Romanian
Academy, \newline P.O. Box 1-764, Bucharest 70700, ROMANIA \newline
Joel.Rouyer@ymail.com, Joel.Rouyer@imar.ro }

{\small \medskip}

{\small Costin V\^{\i}lcu }

{\small \noindent Institute of Mathematics ``Simion Stoilow'' of the Romanian
Academy, \newline P.O. Box 1-764, Bucharest 70700, ROMANIA \newline
Costin.Vilcu@imar.ro }

\end{document}